\title{Three-dimensional quasi-periodic shifted Green function
  throughout the  spectrum---including Wood anomalies}
\author{Oscar P. Bruno\footnote{Applied and Computational Mathematics,
    Caltech, Pasadena, CA 91125. Email: obruno@caltech.edu},
  \, Stephen P. Shipman\footnote{Dept. of Mathematics, Louisiana State
    University, Baton Rouge, LA \ 70803. Email:
    shipman@math.lsu.edu},
  \, Catalin Turc\footnote{Dept. of Math.
    Sciences, New Jersey Inst. of Technology, Newark, NJ 07102. Email: catalin.c.turc@njit.edu},
    \, Stephanos
  Venakides\footnote{Dept. of Mathematics, Duke University, Durham, NC
    \ 27708. Email: ven@math.duke.edu} }
\newtheorem{Theorem}{Theorem}[section]
\newtheorem{Lemma}[Theorem]{Lemma}
\newcommand{\rr}{\boldsymbol{r}}
\newcommand{\xxi}{\boldsymbol{\xi}}
\newcommand{\bfx}{{\mathbf{x}}}
\newcommand{\tbfx}{\tilde\bfx}
\newcommand{\vv}{\mathbf v}
\newcommand{\dd}{D}
\newcommand{\order}[1]{{\mathcal O}\hspace{-2.5pt}\left(#1\right)}
\newcommand{\ZZ}{\mathbb{Z}}
\newcommand{\RR}{\mathbb{R}}
\newcommand{\coeff}{\frac{i}{2\dd}}
\begin{document}
\newpage
\date{}
\maketitle
\begin{abstract}
  This contribution, Part~II in a two-part series, presents an
  efficient method for evaluation of wave scattering by doubly periodic
  diffraction gratings at or near ``Wood anomaly frequencies".  At
  these frequencies---which depend on the angle of incidence and
  periodicity of the grating, and at which one or more {\em grazing}
  Rayleigh waves exist---the quasi-periodic Green function, structured
  as a doubly infinite lattice sum of translated three-dimensional
  free-space Helmholtz Green functions, ceases to converge.  We
  present a modification of this lattice sum which results by adding
  two types of terms to it. The first type adds linear combinations of
  ``shifted'' Green functions, using shift values that ensure that the
  added spatial singularities introduced by these terms are located
  below the grating and therefore outside of the physical domain. With
  suitable coefficient choices these terms annihilate the growing
  contributions in the original lattice sum and yield algebraic
  convergence. (Convergence of arbitrarily high order can be obtained
  by including sufficiently many shifts.) The second type of added
  terms are quasi-periodic plane wave solutions of the Helmholtz
  equation which reinstate certain necessary grazing modes without
  leading to blow-up at Wood anomalies. In particular, using the new
  quasi-periodic Green function, which we denote by ${G}^q_p(\bfx)$,
  we establish, for the first time, that the Dirichlet problem of
  scattering by a smooth doubly periodic scattering surface at a Wood
  frequency is uniquely solvable.  Additionally, we present an
  efficient high-order numerical method based on the Green function
  ${G}^q_p(\bfx)$ for the problem of scattering by doubly periodic
  three-dimensional surfaces at and around Wood frequencies. We
  believe this is the first solver in existence that is applicable to
  Wood-frequency doubly periodic scattering problems.  We demonstrate the
  proposed approach by means of applications to problems of acoustic
  scattering by doubly periodic gratings at various frequencies,
  including frequencies away from, at, and near Wood anomalies.

\medskip
\indent $\mathbf{Keywords}$: scattering, periodic Green function, lattice sum, smooth truncation, Wood frequency, Wood anomaly, boundary-integral equations, electromagnetic computation
\end{abstract}


\section{Introduction}
This work presents the second part of a two-part contribution. The
first part~\cite{BSTV1}, which will be referenced as
Part~I throughout this paper, introduced a ``windowed Green function''
method, which, utilizing a smooth cutoff,
approximates the quasi-periodic Green function with
super-algebraically small errors---that is, errors that admit upper
bounds proportional to any negative power of the numbers of terms
used---for configurations that are not close to a certain set of
``Wood frequencies''.  As discussed in Part~I and references therein,
at Wood frequencies the classical quasi-periodic Green function ceases
to exist, and, therefore, integral equation methods based on such
Green functions are inapplicable. Following upon the two-dimensional
work~\cite{BrunoDelourme}, the present Part~II introduces an
additional element in the method: the ``shifted'' quasi-periodic Green
function.  By adding copies of the quasi-periodic Green function that
are shifted perpendicular to the plane of periodicity,
one obtains an algebraic convergence rate that increases with the
number of shifts, even at and around Wood frequencies.  Using the new quasi-periodic Green function ${G}^q_p(\bfx)$, we establish, for the first time, that the Dirichlet
problem of scattering by a smooth doubly periodic surface, or diffraction grating, at a
Wood frequency is uniquely solvable.   We present an
efficient high-order numerical method based on the Green function
${G}^q_p(\bfx)$ for the problem of scattering by doubly periodic
three-dimensional gratings at and around Wood frequencies. We believe
this is the first solver in existence that is applicable to
doubly periodic scattering problems at Wood frequencies.

Wood frequencies depend on the wave vector parallel to the grating, and they
occur when one of the Rayleigh waves, or diffraction orders, is at the transition from propagating to evanescent in the ambient medium and is thus exactly grazing the surface.  Because of this, they are often called ``cutoff frequencies".  Certain scattering anomalies at or near these frequencies, known as ``Wood anomalies", were first observed experimentally by Wood~\cite{Wood} and treated mathematically by Rayleigh~\cite{Rayleigh} and Fano \cite{Fano}.  A brief discussion concerning historical aspects in these regards can be found in~\cite[Remark 2.2]{BrunoDelourme}.

Computation of scattering by
three-dimensional doubly periodic structures at or near Wood frequencies
is particularly challenging. As mentioned above, in boundary integral
methods the classical quasi-periodic Green function ceases to exist at
Wood frequencies, even when the scattering problem admits a unique
solution.  A boundary-integral method that employs the free-space
Green function and enforces periodicity through auxiliary layer
potentials on the boundary of a period has been developed for
two-dimensional problems~\cite{BarnettGreengard,GillmanBarnett}, but a
three-dimensional version of this method does not as yet exist.
Approaches based on finite-element methods~\cite{ChenFriedman} often
rely on the classical quasi-periodic Green functions in order to
enforce the radiation condition at infinity; such approaches must also
necessarily fail at Wood anomalies.  Finally, finite-element methods
exist, such as that presented in the
contribution~\cite{chandler-wilde}, which enforce the radiation
condition on the basis of sponge layers such as the
perfectly-matched-layer technique.  The results of that reference
indicate that such approaches may become problematic for truly
quasi-periodic problems and, we suggest, the difficulties are
compounded at Wood frequencies, at which it would be necessary to damp
waves which travel in directions parallel to the absorbing layer.

The classical quasi-periodic Green function $G^{q}(\bfx)$
($\bfx= (x,y,z)\in \RR^3$) can be constructed as an infinite sum of
translated copies of the free-space Helmholtz Green function
\begin{equation}
G(\bfx) = \frac{e^{ik|\bfx|}}{4\pi|\bfx|}
\end{equation}
with doubly periodically distributed monopole singularities. Indeed,
let $\tilde\bfx=(x,y)$, $\boldsymbol{\alpha}:=(\alpha,\beta)$ (the
Bloch wave-vector) and
\begin{equation}\label{r}
  r_{mn}^2= \left| \bfx + m\vv_1 + n\vv_2 \right|^2  =  \left| \tilde\bfx + m\vv_1 + n\vv_2 \right|^2 +z^2,
\end{equation}
in which $\vv_1$ and $\vv_2$ denote two independent vectors in $\RR^2$
that characterize the periodicity.  The quasi-periodic Green function
can be expressed in the form
\begin{equation}\label{latticesum1}
  G^{q}(\bfx) \,=\,\sum_{m,n\in\ZZ} G(\bfx+ m\vv_1 + n\vv_2) 
  \, e^{-i\boldsymbol{\alpha}\cdot(m \mathbf{v}_1+n\mathbf{v}_2)}\,=\, \frac{1}{4\pi}\sum_{m,n\in\ZZ} \frac{e^{ikr_{mn}}}{r_{mn}}
  \, e^{-i\boldsymbol{\alpha}\cdot\vv_{mn}},
\end{equation}
in which $\vv_{mn} = m \mathbf{v}_1+n\mathbf{v}_2$.
%
As is well known, this expansion suffers from notoriously poor
convergence properties.  Various methods to accelerate its
convergence, notably the Ewald
method~\cite{Ewald,Capolino,Papanicolaou}, have been
proposed. Importantly, however, the sum does not converge at Wood
frequencies. This is a difficulty that is not addressed by any of the
aforementioned acceleration approaches.

The classical quasi-periodic Green function additionally admits a
spectral representation that results from application of the
Poisson Summation Formula to the series \eqref{latticesum1}: letting
$\vv_1^*$ and $\vv_2^*$ denote the dual vectors defined by
$\vv_i^*\cdot\vv_j=\delta_{ij}$, and letting $\dd=\|\vv_1\times\vv_2\|$,
$\vv_{j\ell}^* = (2\pi j\ \vv_1^*+2\pi\ell\
\vv_2^*)+\boldsymbol{\alpha}$ and
$\gamma_{j\ell}=(k^2-\| \vv_{j\ell}^* \|^2)^\frac{1}{2}$, we have the
alternative expansion
\begin{equation}\label{Fouriersum}
  G^{q}(\tilde\bfx,z) = \frac{i}{2\dd} \sum_{j,\ell\in\ZZ}
  \frac{1}{\gamma_{j\ell}} e^{i\vv_{j\ell}^*\cdot\tilde\bfx}\, e^{i\gamma_{j\ell}|z|}\,,
\end{equation}
which converges provided $\gamma_{j\ell}\ne 0$ for all
$j,\ell$---that is, away from Wood configurations $(\alpha,\beta,k)$ where one of these exponents vanishes. The branch of the
square root that defines $\gamma_{j\ell}$ is selected in such a way
that $\sqrt{1} =1$, with a branch cut that coincides with the negative
imaginary semiaxis.

The primary and dual periodicity lattices are denoted
by
\begin{equation}\label{Lambda}
  \Lambda = \{\vv_{mn}: m,n\in\ZZ \}\quad\mbox{and}\quad \Lambda^*=\{\vv_{j\ell}^*  : j,\ell\in\ZZ\},
\end{equation}
respectively.  The lattice sum~\eqref{latticesum1} is only defined if
$\gamma_{j\ell}\ne 0$ for all integer pairs $(j,\ell)$.  A Wood frequency (for given $\boldsymbol{\alpha}$) is a value of $k$ for which
at least one of the constants $\gamma_{j\ell}$ vanishes. In such cases
both the spatial
expansion~\eqref{latticesum1} and the spectral
representation~(\ref{Fouriersum}) cease to exist.

This paper presents a Green function method that
enables efficient and accurate evaluation of wave scattering by doubly
periodic structures throughout the spectrum, including frequencies at
and around Wood anomalies.  The present contribution additionally
incorporates the windowing approach introduced in Part~I, which
accelerates the Green-function convergence: a windowed version of the
series~\eqref{latticesum1} converges superalgebraically fast (i.e.,
faster than any power of the window size) for non-Wood configurations.
The convergence of this windowed series deteriorates near Wood
frequencies: the constants in the superalgebraic convergence estimates
established in Part~I grow without bound as Wood configurations are
approached, and the windowed version of the lattice sum
(\ref{latticesum1}) once again fails to converge at Wood frequencies.
When applied to the shifting method introduced in this work, however,
the smooth windowing method increases the algebraic convergence rate
at Wood anomalies by a factor equal to the truncation size raised to
the power $-1/2$.

In order to re-establish convergence at Wood frequencies the proposed
method replaces the free-space Green function term
$ \frac{e^{ikr_{mn}}}{r_{mn}}$ by a $p$-th order equispaced
finite-difference for this function with respect to $z$, with
$p\geq 3$ and with step (or ``shift'') $d >0$. The combined effect of
the windowing and shifting/finite-differencing procedure yields a
Green function which converges rapidly at all frequencies. The
approach is demonstrated in Section~\ref{sec:numerical} via an
application to the problems of sound-soft and sound-hard scattering by
doubly periodic surfaces throughout the spectrum. Other scattering
problems can be treated similarly---as demonstrated in the
contributions~\cite{BrunoDelourme} and~\cite{Bruno_Lado} concerning
two-dimensional diffraction gratings and periodic arrays of cylinders,
respectively.

The remainder of this paper is organized as follows. After some
preliminaries in Section~\ref{prelim}, the shifted
quasi-periodic Green function is introduced in
Section~\ref{shifted-G-function}.  This section contains
a proof of high-order algebraic convergence of the shifted Green
function and an existence and uniqueness proof for
configurations at and around Wood anomalies under Dirichlet boundary
conditions. (Appendix~\ref{sec:integral} contains a lemma used in the
aforementioned uniqueness proof, as well as a simplified existence and
uniqueness proof which is only valid away from Wood anomalies.)
Section~\ref{sec:evaluation} then outlines our numerical
implementation. A variety of results presented in
Section~\ref{sec:numerical} demonstrate the character of the proposed
solvers, for all configurations, far, near and at Wood anomalies.
 
\section{Preliminaries\label{prelim}}

We consider the sound-soft problem of scattering by a doubly periodic
scattering surface 
\[
  \Gamma=\{(\tbfx,z): \tbfx\in\mathbb{R}^2 \mbox{ and } z=f(\tbfx)\}
\]
where $f$ is a smooth doubly periodic function of periodicity $\Lambda$:
\begin{equation}
  f(\tbfx+m\vv_1+n\vv_2)=f(\tbfx)
  \quad \text{for all} \quad  \tbfx\in\mathbb{R}^2\quad \text{and all}\quad m,n\in\ZZ\,.                           \end{equation}
Letting
\begin{equation}\label{om+}
\Omega^+=\{(\tbfx,z): \tbfx\in\mathbb{R}^2 \mbox{ and }
z>f(\tbfx)\},
\end{equation}
and assuming an incident field
\begin{equation}\label{eq:planewave}
  u^{inc}(\mathbf x)=\exp[i(\boldsymbol{\alpha}\cdot\tbfx-\gamma z)]
\end{equation}
impinges upon the surface from above (where
$(\boldsymbol{\alpha},-\gamma)$ is the wavevector and
$|\boldsymbol{\alpha}|^2+\gamma^2=k^2$), the scattered field $u$ under
sound-soft conditions satisfies the equations
\begin{equation}\label{helmholtz}
\begin{cases}
\Delta u+k^2 u=0\ {\rm in}\ \Omega^+\\
u=-u^{inc}\ {\rm on}\ \Gamma,
\end{cases}
\end{equation}
together with the Sommerfeld radiation condition~\cite{Petit}: letting
$z_+ = \max{f}$ we have
\begin{equation}\label{outgoing1}
  u(\tbfx,z) \,=\,  \sum_{j,\ell\in\ZZ}B^+_{j\ell}\, \exp{i[(2\pi j\, \vv_1^*+ 2\pi\ell\, \vv_2^*)+\boldsymbol{\alpha}]\cdot\tilde\bfx}\exp[i\gamma_{j\ell}z], \quad z>z_+.
\end{equation}

Although not physically relevant for the grating problems considered
in this paper, the set
\begin{equation}\label{om-}
\Omega^{-}=\{(\tbfx,z): \tbfx\in\mathbb{R}^2 \mbox{ and
}z<f(\tbfx)\}
\end{equation} 
below $\Gamma$ and the associated radiation condition
\begin{equation}\label{outgoing2}
  u(\tbfx,z) \,=\,  \sum_{j,\ell\in\ZZ}B^-_{j\ell}\, \exp\{i[(2\pi j\, \vv_1^*+ 2\pi\ell\, \vv_2^*)+\boldsymbol{\alpha}]\cdot\tilde\bfx\}\exp[-i\gamma_{j\ell}z], \quad z<z_-.
\end{equation}
($z_- = \min{f}$) will be used in the existence and uniqueness
proofs in Section~\ref{uniqueness} and
Appendix~\ref{sec:integral}.

\section{Shifted quasi-periodic Green function\label{shifted-G-function}}
The finite-difference half-space shifted Green function we use can be
viewed as a generalization of the Dirichlet half-space Green that
results from the method of images.  The
method-of-images Green function decays more rapidly at infinity than
the free-space Green function itself. In the
three-dimensional case under consideration, such decay does not
suffice to induce fast convergence, or even absolute convergence, in a
corresponding series of the form~(\ref{latticesum1}). (In contrast,
absolute convergence does result from use of the method-of-images
Green function in the two-dimensional case~\cite[Lemma 4.3 and
Th.~4.4]{BrunoDelourme}). But viewing the method-of-images Green
function as a finite difference of the lowest order, a generalization
of this idea emerges: as shown in what follows, a higher-order finite
difference of the free-space Green function tends to zero more rapidly
at infinity, and thus gives rise to an absolutely convergent
quasi-periodic Green function series. In fact, the resulting
quasi-periodic Green function can be made to converge with a
prescribed order of accuracy provided finite-differences of
sufficiently high order are used.

To pursue this idea, fix a ``shift'' value $d>0$ and
define the shifted half-space Green function
\begin{equation}\label{Gtildeqp2} 
  \tilde{G}_p(\bfx) = \sum_{q=0}^p a_{pq} G\!\big(\bfx+(0,0,qd)\big) = 
  \sum_{q=0}^p a_{pq} G\!\big(\tbfx+(0,0,z+qd)\big),
\end{equation}
where $a_{pq}$ denote the finite-difference
coefficients~\cite{BrunoDelourme,Knuth}
\begin{equation*}
  a_{pq} = (-1)^q {p\choose q}\,, \qquad 0\leq q\leq p.
\end{equation*}
Clearly, the function $\tilde{G}_p(\bfx)$ has poles at the points
$(0,0,-qd)$ for $0\leq q\leq p$.  As shown in Lemma~\ref{gderivatives}
below, the shifted half-space Green function $\tilde{G}_p(\bfx)$ tends
to zero algebraically fast as $\tbfx$ tends to infinity while $z$
remains bounded. The shifted {\em quasi-periodic} Green function is
then defined by
\begin{equation}\label{Gtildeqp} 
  \tilde{G}^q_p(\bfx) \,= \,\sum_{m,n\in\ZZ} \tilde{G}_p(\bfx + m\mathbf{v}_1+n \mathbf{v}_2)
  \,e^{-i\boldsymbol{\alpha}\cdot(m \mathbf{v}_1+n \mathbf{v}_2)}.
\end{equation}
The function $\tilde{G}^q_p(\bfx)$ has poles at the points
\begin{equation}
  \bfx'_{mnq} = (m\mathbf{v}_1 + n\mathbf{v}_2,\, -qd\,),
  \quad
  m,n,q\in\ZZ,\;\; 0\leq q\leq p\,. 
\end{equation}
Theorem~\ref{thm:algebraic} below shows that the sum on the right-hand
side of equation~\eqref{Gtildeqp} converges algebraically fast: a
truncation of this series to $|m \mathbf{v}_1+n \mathbf{v}_2|\leq A$
results in errors of order $\lceil p/2 \rceil - 1$ (where
$\lceil r \rceil$ denotes the smallest integer greater than or equal
to $r$) for all wavevectors
$\mathbf{k} = (\boldsymbol{\alpha},-\gamma)$ and all periodicity
lattices $\Lambda$, including Wood-anomaly configurations. That
theorem also establishes that somewhat improved accuracies result when
a smooth windowed truncation, as introduced in Part~I, is additionally
employed.

When evaluated at $(\tbfx-\tbfx',z-z')$ with $(\tbfx,z)\in\Gamma$ and
$(\tbfx',z')\in\Gamma$, the terms with $q=1,\dots, p$ in the
series~\eqref{Gtildeqp2}-\eqref{Gtildeqp} are weighted copies of the
free-space Green function with sources at the points $(\tbfx',z'-qd)$
below the grating surface $\Gamma$, while the terms corresponding to
$q=0$ produce the necessary sources on $\Gamma$. It follows that the
layer potentials associated with the shifted Green function
(cf. equation~\eqref{combinedlayer} below) are well defined for all
points $\bfx = (\tbfx,z)$ on and above the grating surface, and
satisfy the Helmholtz equation for $\bfx$ above $\Gamma$.

It is important to note that, as shown in Section~\ref{modified_fnct},
the addition of the shifted terms effectively suppresses all
contributions in equation~\eqref{Fouriersum} that contain vanishing
denominators at Wood frequencies, and thereby reinstates convergence
of the quasi-periodic Green function even at such frequencies. But
such modes are required in the complete solution of the grating
scattering problem. Therefore corresponding quasi-periodic plane-wave
terms need to be added to the Green function to incorporate (now with
controlled coefficients) all the necessary grazing modes, as detailed
in Section~\ref{modified_fnct}.  The corresponding treatments
for simply periodic scattering surfaces and arrays of cylinders in two
dimensions is presented in references~\cite{BrunoDelourme}
and~\cite{Bruno_Lado}, respectively.

\subsection{Convergence of the modified Green function at Wood frequencies}\label{sec:Wood} 

Lemma~\ref{gderivatives} below states that, as desired, the shifted
half-space Green functions~\eqref{Gtildeqp2} enjoy enhanced degrees of
decay as $\tbfx\to\infty$. On the basis of this result,
Theorem~\ref{thm:algebraic} then establishes the fast convergence of
two different truncations (using discontinuous and and smooth window
functions) for the spatial lattice sum~\eqref{Gtildeqp}.  In
preparation for the proofs, we examine the individual
terms $\tilde{G}^q_p(\bfx+m\mathbf{v}_1+n \mathbf{v}_2)$. Considering
the relations~\eqref{r},~\eqref{latticesum1} and~\eqref{Gtildeqp2} and
using the notations
\begin{equation}\label{g}
g(\rho,\varepsilon) := \frac{e^{ik\rho\sqrt{1+\varepsilon^2}}}{\rho\sqrt{1+\varepsilon^2}},\quad  \rho_{mn} = | \tbfx+m\vv_1+n\vv_2 |, \quad
  \varepsilon_{mn}=z/\rho_{mn} \quad \mbox{and}\quad \hat\varepsilon_{mn}=d/\rho_{mn},
\end{equation}
the translated Green-function terms in the sum~\eqref{Gtildeqp} can be
expressed in the form
\begin{equation}
\tilde{G}_p(\bfx + m\mathbf{v}_1+n \mathbf{v}_2) =
\sum_{q=0}^p a_{pq} g(\rho_{mn},\varepsilon_{mn}+q\hat\varepsilon_{mn}).
\end{equation}
Equivalently, letting
\begin{equation}\label{h}
  h(\rho,\varepsilon,\hat\varepsilon) :=
  \sum_{q=0}^p a_{pq} g(\rho,\varepsilon+q\hat\varepsilon),
\end{equation}
we have
\begin{equation}
  \tilde{G}_p(\bfx + m\mathbf{v}_1+n \mathbf{v}_2) =h(\rho_{mn},\varepsilon_{mn},\hat\varepsilon_{mn}).
\end{equation}

In order to estimate the asymptotics of the function $h$ as
$\rho\to\infty$ we use the finite-difference relation~\cite[p. 262,
eq. 7]{isaacson:1994}
\begin{equation}\label{pthdifference}
  \sum_{q=0}^p a_{pq} f(\varepsilon+q\hat\varepsilon)) \,=\,
  (-1)^p{\hat\varepsilon}^p f^{(p)}(\varepsilon+\xi)
  \qquad\xi \in[0, p\hat\varepsilon ],
\end{equation}
which is valid for every $p$-times continuously differentiable
function $f$. Thus, for each pair of values of $\rho$ and
$\hat\varepsilon$ there exists
$\xi_{\rho,\hat\varepsilon}\in[0, p\hat\varepsilon ]$ such that
\begin{equation*}
  h(\rho,\varepsilon,\hat\varepsilon)
  =(-1)^p{\hat\varepsilon}^p \frac{\partial^p g}{\partial\varepsilon^p}(\rho,\varepsilon+\xi_{\rho,\hat\varepsilon}),
\end{equation*}
and, therefore
\begin{equation}\label{hbound0}
  h\left(\rho,\,z/\rho,\,d/\rho\right) =  (-1)^p\left(\frac{d}{\rho}\right)^p\, \frac{\partial^p g}{\partial\varepsilon^p} (\rho,z/\rho+\xi_{\rho,d/\rho}),\quad
    \xi_{\rho,d/\rho}\in[0, pd/\rho ].
\end{equation}
In view of the asymptotic bounds on $\partial^p g / \partial
\varepsilon^p$ in Lemma~\ref{gderivatives} below, one obtains $
h_p\big(\rho,\,z/\rho,\,d/\rho\big) \,=\, \order{1/\rho^{\lceil
    \frac{p}{2} \rceil+1}} $ as $ (\rho\to\infty) $ where the constant
in the $\cal{O}$-term, which depends on $z$, $d$ and $p$, can be taken
to be fixed if $d$ and $p$ are given and $z$ is contained in a bounded
subset of $\RR$.

\begin{Lemma}\label{gderivatives}  
  The $p$-th order derivative of the function $g$ with respect to
  $\varepsilon$ satisfies
\begin{equation}\label{gbound}
  \left | \frac{1}{\rho^p} \frac{\partial^p g}{\partial\varepsilon^p}(\rho,\varepsilon)\right| \leq  \frac{C}{\rho^{\lceil \frac{p}{2} \rceil+1}} \qquad (\rho\geq 1,\  |\varepsilon | <
1),
\end{equation}
where the constant $C$ is independent of $\rho$ and $\varepsilon$ for
all $\rho\geq 1$ and all $\varepsilon$ satisfying $|\varepsilon| <
1$. Here, for real $x$, $\lceil x\rceil$ denotes the smallest integer
larger than or equal to $x$.  The estimate~\eqref{gbound} is sharp:
the left-hand side in that equation does not decay like $1/\rho^t$ for
any $t>\lceil \frac{p}{2} \rceil+1$.  Furthermore,
\begin{equation}\label{hbound}
  h\big(\rho,\,z/\rho,\,d/\rho\big) \,=\, \order{\frac{1}{\rho^{\lceil \frac{p}{2} \rceil+1}}} \qquad (\rho\to\infty).
\end{equation}
\end{Lemma}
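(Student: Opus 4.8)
The plan is to recast the estimate as a statement about the axial derivatives of the free-space kernel, and then to read off the decay rate from the kernel's parity together with an elementary power count. Concretely, I would set $z=\rho\varepsilon$, so that $\rho\sqrt{1+\varepsilon^2}=\sqrt{\rho^2+z^2}=:r$ and $g(\rho,\varepsilon)=G_0(\rho,\rho\varepsilon)$, where $G_0(\rho,z):=\phi\big(\sqrt{\rho^2+z^2}\,\big)$, $\phi(r):=e^{ikr}/r$, is (up to the factor $4\pi$) the three-dimensional Helmholtz Green function written in cylindrical coordinates. Since $\partial_\varepsilon$ at fixed $\rho$ corresponds to $\rho\,\partial_z$, this yields the exact identity $\rho^{-p}\,\partial^p_\varepsilon g(\rho,\varepsilon)=\big(\partial_z^{\,p}G_0\big)(\rho,\rho\varepsilon)$, so that \eqref{gbound} becomes the decay bound $\partial_z^{\,p}G_0(\rho,z)=\order{\rho^{-\lceil p/2\rceil-1}}$, to be established as $\rho\to\infty$ while $z=\rho\varepsilon$ ranges over a bounded set $|z|\le Z$. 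This is exactly the regime that arises in \eqref{hbound0}, where the evaluation point $\varepsilon=z/\rho+\xi$ satisfies $\rho\varepsilon=z+\rho\xi\in[z,z+pd]$, and it is the regime signalled by the stated dependence of the constant on a bounded range of $z$.

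The two structural facts I would exploit are that $r$, and hence $G_0$, is an \emph{even} function of $z$, and that for bounded $z$ the deviation $s:=r-\rho=\tfrac{z^2}{2\rho}+\order{\rho^{-3}}$ has small $z$-derivatives: a direct computation gives $\partial_z^{\,i}s=\order{\rho^{\,1-2\lceil i/2\rceil}}$ uniformly for $|z|\le Z$ and $\rho\ge 1$. Writing $G_0=\phi(\rho+s)$ and applying the Fa\`a di Bruno formula, $\partial_z^{\,p}G_0$ is a finite sum of terms $\phi^{(j)}(\rho+s)\prod_{a}\partial_z^{\,i_a}s$ indexed by partitions $i_1+\cdots+i_j=p$ with $i_a\ge 1$. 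Here $\phi^{(j)}(r)=\order{1/r}=\order{\rho^{-1}}$ for $r\ge\rho\ge 1$, so each such term is $\order{\rho^{-1+\sum_a(1-2\lceil i_a/2\rceil)}}$. Using $2\lceil i_a/2\rceil=i_a+[\,i_a\text{ odd}\,]$, the exponent equals $-1-p+\big(j-\#\{a:i_a\text{ odd}\}\big)=-1-p+\#\{a:i_a\text{ even}\}$; and since every even part is $\ge 2$ one has $\#\{a:i_a\text{ even}\}\le\lfloor p/2\rfloor$. Hence every term is $\order{\rho^{-1-p+\lfloor p/2\rfloor}}=\order{\rho^{-\lceil p/2\rceil-1}}$, which is the claimed bound. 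The same count gives sharpness: the partition into $\lfloor p/2\rfloor$ twos (together with a single $1$ when $p$ is odd) is the \emph{unique} one realizing the extremal exponent, and it carries a nonvanishing leading coefficient — the product of the nonzero leading symbols of $\phi^{(j)}$ and of $\partial_z^2 s$ — so there is no cancellation and no decay faster than $\rho^{-\lceil p/2\rceil-1}$.

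Finally, \eqref{hbound} follows immediately: inserting the just-proved bound into the mean-value representation \eqref{hbound0}, namely $h(\rho,z/\rho,d/\rho)=(-1)^p(d/\rho)^p\,\partial_\varepsilon^p g(\rho,\varepsilon^*)$ with $\rho\varepsilon^*\in[z,z+pd]$, gives $|h|=d^p\,\big|\rho^{-p}\partial_\varepsilon^p g(\rho,\varepsilon^*)\big|\le C\,d^p\,\rho^{-\lceil p/2\rceil-1}$. As an optional conceptual check on the half-order gain I would note its origin in the Helmholtz equation: since $\partial_z^2 G_0=-(k^2+\partial_\rho^2+\rho^{-1}\partial_\rho)G_0$, one may trade two axial derivatives for two radial ones, and on the leading oscillatory factor $e^{ik\rho}\rho^{-c}$ the operator $-(k^2+\partial_\rho^2)$ annihilates the top power of $\rho$, improving the decay by one power per pair of derivatives — precisely the mechanism that the Fa\`a di Bruno count captures automatically.

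The main obstacle is exactly this half-order improvement. A naive estimate that lets every derivative fall on the phase $e^{ikr}$ produces only $\order{\rho^{-1}}$; indeed that is the true rate once $\varepsilon$ is bounded away from $0$, which is why the estimate must be understood in the regime $\rho\varepsilon$ bounded. Extracting the sharp exponent requires tracking how the evenness of $r$ in $z$ forces the dominant contributions to \emph{pair up} derivatives through $\partial_z^2 s=\order{\rho^{-1}}$ rather than through the single large factor $\partial_z r$. The one genuinely technical point to discharge with care is the uniformity of the auxiliary estimates $\partial_z^{\,i}s=\order{\rho^{1-2\lceil i/2\rceil}}$ and $\phi^{(j)}(r)=\order{\rho^{-1}}$ over the full range $\rho\ge 1$, $|z|\le Z$; this is routine, the only mild care being for the compact low-$\rho$ range, which is absorbed into the constant.
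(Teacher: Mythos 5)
Your proposal is correct, and it reaches the lemma by a genuinely different route than the paper. The paper never changes variables: it differentiates $g$ directly in $\varepsilon$, organizing $\partial_\varepsilon^p g$ as a linear combination of monomials $T_{m,n,\ell}=\varepsilon^m(ik\rho)^n(1+\varepsilon^2)^{-\ell/2}$ generated from $T_{0,0,1}$ by three unary operators $f^-,f^0,f^+$; it then substitutes $\varepsilon=z/\rho$, assigns each monomial the $\rho$-order $Q=n-m$, and argues combinatorially that $Q\leq\lfloor p/2\rfloor$ because each order-raising application of $f^+$ must be preceded by an $m$-raising operator, with sharpness obtained by noting that all contributions to the top-order coefficient are nonnegative and exhibiting a positive witness, e.g.\ $(f^+)^{p/2}(f^0)^{p/2}T_{0,0,1}$. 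Your change of variables $z=\rho\varepsilon$, the identity $\rho^{-p}\partial_\varepsilon^p g=\partial_z^p G_0$, and the Fa\`a di Bruno count over partitions of $p$, with the inner-function bounds $\partial_z^{\,i}s=\order{\rho^{1-2\lceil i/2\rceil}}$ coming from the evenness and homogeneity of $r$ in $z$, perform exactly the same bookkeeping in standard clothing: your constraint ``number of even parts $\leq\lfloor p/2\rfloor$'' is the paper's ``at most half the operators can be $f^+$,'' and your unique extremal partition $(2,\dots,2)$, resp.\ $(2,\dots,2,1)$, corresponds to the paper's explicit witness terms for $p$ even, resp.\ odd. What your route buys is a self-contained, standard mechanism (composition $\phi(\rho+s)$ plus parity of $r$) in place of the paper's ad hoc operator calculus, and, importantly, an honest identification of the regime of validity: you are right that the uniform claim over all $|\varepsilon|<1$ in the lemma's statement cannot hold as written (for fixed $\varepsilon\neq0$ the decay is only $\order{\rho^{-1}}$, as one already checks for $p=1$), and that the bound is true precisely when $\rho\varepsilon$ remains bounded --- which is exactly the regime the paper's own proof silently treats by substituting $\varepsilon=z/\rho$, and the only regime invoked in \eqref{hbound0} and \eqref{hbound}.

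One small point to tighten in your sharpness argument: for odd $p$ the extremal partition carries the factor $\partial_z s=z/r$, whose leading symbol is proportional to $z$, so non-cancellation requires evaluating at $z\neq0$ (at $z=0$ the function $\partial_z^p G_0$ is odd in $z$ and vanishes); sharpness should therefore be asserted along $\varepsilon=z/\rho$ with fixed $z\neq0$. With that caveat, your derivation of \eqref{gbound}, its sharpness, and the deduction of \eqref{hbound} from \eqref{hbound0} are complete, and the uniformity over $\rho\geq1$, $|z|\leq Z$ is, as you say, routine compactness.
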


\noindent {\em\bfseries Proof.} It suffices to establish
that~\eqref{gbound} holds and is sharp; Equation (\ref{hbound}) then
follows directly from (\ref{hbound0}) and (\ref{gbound}).  In order to
obtain the relation~\eqref{gbound} we first note that the
$p^\text{th}$ derivative $\frac{\partial^p g}{\partial\varepsilon^p}$
of~$g$ with respect to $\varepsilon$ can be expressed as a finite
linear combination of the form
\begin{equation}\label{termofdg}
  \frac{\partial^p g}{\partial\varepsilon^p}(\rho,\varepsilon) = \sum_{(m,n,\ell)\in S_p} C_{m,n,\ell}^p \frac{\varepsilon^m(ik\rho)^n}{\sqrt{1+\varepsilon^2\,}^\ell}\, \frac{e^{ik\rho\sqrt{1+\varepsilon^2}}}{\rho},
\end{equation}
in which $S_p$ is a certain set of triples of non-negative integer
indices and $C_{m,n,\ell}^p$ denote real valued coefficients.
Defining
\begin{equation}\label{T_def}
  T_{m,n,\ell}(\rho,\varepsilon) = 
  \frac{\varepsilon^m(ik\rho)^n}{\sqrt{1+\varepsilon^2\,}^\ell} \quad \text{for } m,n,\ell\geq0, 
\end{equation}
 we may thus write
\begin{equation}\label{p_exp}
  \frac{\partial^p g}{\partial\varepsilon^p}(\rho,\varepsilon) = \sum_{(m,n,\ell)\in S_p} C_{m,n,\ell}^p T_{m,n,\ell}(\rho,\varepsilon) \frac{e^{ik\rho\sqrt{1+\varepsilon^2}}}{\rho}.
\end{equation}
But, it is easy to check that 
\begin{equation}\label{recur0}
\displaystyle g(\rho,\varepsilon) = T_{0,0,1}(\rho,\varepsilon) \frac{e^{ik\rho\sqrt{1+\varepsilon^2}}}{\rho},
\end{equation}
and that, defining the unary operators
\begin{align}
   & f^-T_{m,n,\ell}(\rho,\varepsilon) = T_{m+1,n,\ell+2}(\rho,\varepsilon), \\
   & f^0T_{m,n,\ell}(\rho,\varepsilon) = T_{m+1,n+1,\ell+1}(\rho,\varepsilon),\quad\mbox{and} \\
   & f^+T_{m,n,\ell}(\rho,\varepsilon) = \begin{cases}
    T_{m-1,n,\ell}(\rho,\varepsilon)\quad &m\geq 1\\
    0\quad &m = 0
\end{cases}
\label{fplus}
\end{align}
we have
\begin{equation}\label{recur}
  \displaystyle \frac{\partial}{\partial\varepsilon} \left(
    T_{m,n,\ell}(\rho,\varepsilon)\frac{e^{ik\rho\sqrt{1+\varepsilon^2}}}{\rho} \right) =
  \left( -\ell f^-T_{m,n,\ell} + f^0T_{m,n,\ell} + mf^+T_{m,n,\ell}
  \right) \frac{e^{ik\rho\sqrt{1+\varepsilon^2}}}{\rho} \quad(m\geq0).
\end{equation}
We now may (and do) redefine the set $S_p$ to ensure that it contains
exactly the triples $(m,n,\ell)$ corresponding to a sequence of $p$
applications of the unary operators~\eqref{fplus}. Thus calling
$F=\{f^-,f^0,f^+\}$ we let
\begin{equation}\label{Sp_let}
  S_p = \{(m,n,\ell):T_{m,n,\ell} = f_1f_2\dots f_p T_{0,0,1} \mbox{
    where } f_j\in F \mbox{ for } j=1,\dots,p\}.
\end{equation}

Application of the operator $f^+$ results in a decrease by
one in the power of $\varepsilon$ in the expression~\eqref{T_def} for
$T_{m,n,\ell}(\rho,\varepsilon)$, while application of either $f^0$
or $f^-$ results in an increase by one in that power. Susbstituting
$\varepsilon=z/\rho$ in~\eqref{T_def}, on the other hand, we
obtain
\begin{equation*}
  T_{m,n,\ell}(\rho,z/\rho) \sim z^m(ik)^n \rho^{n-m} \quad (\rho\to\infty) \quad\text{for }m\geq0,
\end{equation*}
and we thus define the $\rho$-order $Q$ of $T_{m,n,\ell}$ by
\begin{equation*}
  Q(T_{m,n,\ell}) = n-m.
\end{equation*}
Notice that an application of the operator $f^+$ (resp. $f^0$, $f^-$)
to $T_{m,n,\ell}$ results in an increase (resp. no change, decrease)
in the $\rho$-order $Q$.

Let $(m_0,n_0,\ell_0)$ be such that $Q_+=Q(T_{m_0,n_0,\ell_0})$ is greater
than or equal to $Q(T_{m,n,\ell})$ for all $(m,n,\ell)\in S_p$. We
claim that $C_{m_0,n_0,\ell_0}^p\ne 0$. To establish this fact,
first note that, in view of equations~\eqref{termofdg}
and~\eqref{recur}, the coefficient $C_{m,n,\ell}^p$ in the
sum~\eqref{p_exp} equals a sum of several contributions, each one of
which results from a sequence of $p$ operations from the set
$\{f^0,f^+\}$ applied to the root expression
$T_{0,0,1}(\rho,\varepsilon)$. Indeed, while in principle all three
elements in the set $\{f^-,f^0,f^+\}$ appear as a contribution to a
coefficent $C_{m,n,\ell}^p$, it is easy to check that $f^-$ cannot
appear as a contribution towards the maximum order coefficient
$C_{m_0,n_0,\ell_0}$, since, as pointed out above, the $f^-$ operator
decreases the $\rho$-order $Q$.

Since none of the $f^0$ and $f^+$ contributions are negative, it only
remains to check that there is at least one positive contribution to
the coefficient of $C(m_0,n_0,\ell_0)$ of $T(m_0,n_0,\ell_0)$ of
$\rho$-order $Q_+$. But, in view of equation~\eqref{fplus}, a nonzero
contribution of the form $f_1f_2\dots f_p T_{0,0,1}$ to the
coefficient $C(m_0,n_0,\ell_0)$ can only result provided no more than
half of $p$ operators used equals $f^+$---which implies, in
particular, that $Q_+\leq p/2$.  In fact we have $Q_+=p/2$
(resp. $Q_+=(p-1)/2$) for $p$ even (resp. for $p$ odd), and a positive
contribution to $C(m_0,n_0,\ell_0)$ is provided by
$T(m_0,n_0,\ell_0)=(f^+)^\frac{p}{2}(f^0)^\frac{p}{2}T(0,0,1)$ for $p$
even and by
$T(m_0,n_0,\ell_0)=(f^+)^\frac{p-1}{2}(f^0)^\frac{p+1}{2}T(0,0,1)$ for
$p$ odd. Taking into account the factor of $1/\rho$ in each one of the
terms in equation~\eqref{p_exp}, equation~\eqref{gbound} follows and,
thus, in view of~\eqref{hbound0}, so does~\eqref{hbound}. The proof is
now complete.  \hfill $\blacksquare$

\medskip

Now we are able to prove the algebraic convergence of the lattice sum for $\tilde{G}^q_p(\bfx)$.


\begin{Theorem}[Modified Green function for all frequencies; algebraic convergence]\label{thm:algebraic}
  Let $\chi(r)$ be a smooth truncation function equal to $1$ for
  $r<r_1$ and equal to $0$ for $r>r_2$ ($0<r_1<r_2$), and let $p$
  denote an integer such that $p\geq 3$.  Then, for all real triples
  $(k,\alpha,\beta)=(k,\boldsymbol{\alpha})$ ($k\not=0$) the sums
\begin{eqnarray}
\label{GreenReflectedCutoff1}
  G^{p,A}(\tbfx,z) &=& \frac{1}{4\pi}\!\!
  \sum_{\stackrel{m,n\in\ZZ}{|m\vv_1+n\vv_2|\leq A}}
  \!\!e^{-i\boldsymbol{\alpha}\cdot(m \mathbf{v}_1+n \mathbf{v}_2)} \sum_{q=0}^p a_{pq} \frac{e^{ikr_{mn}^q}}{r_{mn}^q}\quad\mbox{and}\\
\label{GreenReflectedCutoff2}
  \hat G^{p,A}(\tbfx,z) &=& \frac{1}{4\pi}\,  \sum_{m,n\in\ZZ} e^{-i\boldsymbol{\alpha}\cdot(m \mathbf{v}_1+n \mathbf{v}_2)} \sum_{q=0}^p a_{pq} \frac{e^{ikr_{mn}^q}}{r_{mn}^q} \,
  \chi(\tilde r_{mn}/A) \,,
\end{eqnarray}
where $(r_{mn}^q)^2 = | \tbfx + m\vv_1+n\vv_2 |^2 + (z+qd)^2$\, and\,
$\tilde r_{mn} = | \tbfx + m\vv_1+n\vv_2 |$\,, converge to a radiating
quasi-periodic modified Green function $\tilde{G}^q_p(\bfx)$ which
satisfies the Partial Differential Equation
\begin{equation}\label{GqperHelmholtz}
  \nabla^2 \tilde{G}^q_p(\bfx) + k^2 \tilde{G}^q_p(\bfx) \,=\,
        -\sum_{m,n\in\ZZ}\sum_{q=0}^p\, e^{i\boldsymbol{\alpha}\cdot(m \mathbf{v}_1+n \mathbf{v}_2)} \delta(\bfx-\bfx'_{mnq})\,,
\end{equation}
as well as the quasi-periodicity condition\; $\tilde{G}^q_p(\bfx+(m\vv_2+n\vv_2,0)) =
\tilde{G}^q_p(\bfx)\,e^{i\boldsymbol{\alpha}\cdot(m \mathbf{v}_1+n \mathbf{v}_2)}$.\; Further, there
exists a constant $C_p=C_p(k,\alpha,\beta)$ for which
\begin{align}
  &\left| G^{p,A}(\bfx) - \tilde{G}^q_p(\bfx) \right| < \frac{C_p}{A^{\lceil p/2 \rceil - 1}}\quad\mbox{and} \label{estimate_1}\\
  &\left| \hat G^{p,A}(\bfx) - \tilde{G}^q_p(\bfx) \right| < \frac{C_p}{A^{\lceil p/2 \rceil - 1/2}} \label{estimate_2}
\end{align}
for all sufficiently large values of $A$.
\end{Theorem}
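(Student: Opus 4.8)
The plan is to reduce every assertion to the single-term decay furnished by Lemma~\ref{gderivatives} and then to control the resulting lattice sum by comparison with a radial integral, treating the two truncations separately. The point of departure is the identity $\tilde{G}_p(\bfx+\vv_{mn})=h(\rho_{mn},\varepsilon_{mn},\hat\varepsilon_{mn})$ together with \eqref{hbound}, which I would record in the uniform form
\begin{equation*}
  \bigl|\tilde{G}_p(\bfx+\vv_{mn})\bigr|\;\le\;\frac{C}{\rho_{mn}^{\,\lceil p/2\rceil+1}},\qquad \rho_{mn}=|\tbfx+\vv_{mn}|,
\end{equation*}
valid with a single constant $C$ as $\bfx$ ranges over any fixed compact set, since the constant in Lemma~\ref{gderivatives} may be held fixed while $z$ stays bounded.

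First I would settle existence and the distributional identities. Because the lattice $\Lambda$ has $\order{R}$ points in each annulus $R\le|\vv_{mn}|\le R+1$, the bound above gives $\sum_{m,n}|\tilde{G}_p(\bfx+\vv_{mn})|\le C'\int_1^\infty R^{-(\lceil p/2\rceil+1)}\,R\,dR$, which is finite \emph{precisely} because $p\ge3$ forces $\lceil p/2\rceil+1>2$; the convergence is uniform on compact subsets of $\RR^3$ that avoid the poles $\bfx'_{mnq}$. Thus \eqref{Gtildeqp} defines a function $\tilde{G}^q_p$, smooth off its poles, to which both $G^{p,A}$ and $\hat G^{p,A}$ converge. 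Termwise application of $\nabla^2+k^2$, legitimate by this locally uniform convergence, annihilates each summand away from its monopole and reproduces the delta sources at the points $\bfx'_{mnq}$, yielding \eqref{GqperHelmholtz}; the quasi-periodicity relation follows from the substitution $\vv_{mn}\mapsto\vv_{mn}+\vv_{m'n'}$, which reindexes the series and extracts the phase $e^{i\aalpha\cdot\vv_{m'n'}}$; and the radiating character is read off from the Poisson-summed spectral form of the series (the analogue of \eqref{Fouriersum} for the shifted function).

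The estimate \eqref{estimate_1} for the sharp truncation \eqref{GreenReflectedCutoff1} is then immediate. The error $G^{p,A}-\tilde{G}^q_p$ is exactly the tail $-\tfrac1{4\pi}\sum_{|\vv_{mn}|>A}e^{-i\aalpha\cdot\vv_{mn}}\tilde{G}_p(\bfx+\vv_{mn})$, and the triangle inequality together with the pointwise bound and the annular count gives
\begin{equation*}
  \bigl|G^{p,A}-\tilde{G}^q_p\bigr|\;\le\;C\!\!\sum_{|\vv_{mn}|>A}\!\!\rho_{mn}^{-(\lceil p/2\rceil+1)}\;\le\;C''\!\int_A^\infty\! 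R^{-\lceil p/2\rceil}\,dR\;=\;\order{A^{-(\lceil p/2\rceil-1)}}.
\end{equation*}

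The main obstacle is the sharper bound \eqref{estimate_2}, because the crude argument just used yields only $A^{-(\lceil p/2\rceil-1)}$ for the smooth window as well; the extra half power must be extracted from cancellation. Here I would exploit the oscillatory factor $e^{ik\rho_{mn}}$ in the leading asymptotics of $\tilde{G}_p$ (visible in \eqref{hbound0}, where the dominant term behaves like $\rho^{-(\lceil p/2\rceil+1)}e^{ik\rho}$) together with the smoothness of $\chi$. Applying the Poisson summation machinery of Part~I to the windowed summand $\tilde{G}_p(\bfx+\vv_{mn})\,\chi(\tilde r_{mn}/A)$ converts the error $\hat G^{p,A}-\tilde{G}^q_p$ into contributions indexed by the dual lattice, each a radial integral whose angular part is the Bessel average $\int_0^{2\pi}e^{-i|\vv_{j\ell}^*|\rho\cos\theta}\,d\theta=2\pi J_0(|\vv_{j\ell}^*|\rho)$. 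Since $J_0(|\vv_{j\ell}^*|\rho)=\order{\rho^{-1/2}}$, the effective radial integrand decays one half power faster, like $\rho^{-(\lceil p/2\rceil+1/2)}$ after including the two-dimensional area factor $\rho$, and integrating its tail from $\rho\sim A$ produces exactly $\order{A^{-(\lceil p/2\rceil-1/2)}}$. The smoothness of $\chi$ is what makes the nonzero dual-lattice corrections negligible, so that the lattice sum is faithfully represented by these integrals; a sharp cutoff would instead generate boundary contributions at $|\vv_{mn}|=A$ that do not benefit from the angular cancellation, which is the reason only the smooth window gains the extra half power. I expect the delicate point to be the uniformity of this gain across all configurations, including Wood frequencies: at a Wood frequency some $|\vv_{j\ell}^*|$ equals $k$, so the radial phase $e^{i(k-|\vv_{j\ell}^*|)\rho}$ becomes stationary and no further power is gained from radial oscillation — but the angular $\rho^{-1/2}$ factor persists regardless, which is precisely what secures the stated rate uniformly up to and at Wood anomalies.
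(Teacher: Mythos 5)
Your proposal is correct and follows essentially the same route as the paper: the sharp-truncation estimate \eqref{estimate_1} via the tail of the $\rho^{-(\lceil p/2\rceil+1)}$ bound of Lemma~\ref{gderivatives} with annular counting, and the windowed estimate \eqref{estimate_2} via Poisson summation split into the non-Wood dual-lattice contributions (superalgebraically convergent, by the Part~I machinery) and the Wood modes, where the angular integral produces $e^{ikr}J_0(kr)$ whose $\mathcal{O}\big((Akr)^{-1/2}\big)$ Bessel decay supplies the extra half power even though the $e^{i(k-|\vv^*_{j\ell}|)r}$ component ceases to oscillate. This is precisely the paper's $S_1$/$S_2$ decomposition, down to the mechanism you identify for why the half-power gain persists at Wood anomalies.
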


\noindent {\em\bfseries
  Proof.}
The sum \eqref{GreenReflectedCutoff1} can be re-expressed in the form
\begin{equation}\label{Gpak}
  G^{p,A}(\bfx) \,=\, \frac{1}{4\pi}
  \sum_{\stackrel{m,n\in\ZZ}{|m\vv_1+n\vv_2|\leq A}}
    h(\rho_{mn}, z/\rho_{mn}, d/\rho_{mn})
    e^{-i(\alpha m + \beta n)}.
\end{equation}
But, in view of \eqref{hbound} and letting $\nu = \lceil p/2 \rceil +
1$ we see that
\begin{equation*}
  \frac{1}{4\pi} h\left(\rho_{mn}, z/\rho_{mn}, d/\rho_{mn}\right)
  = \frac{1}{\left|\vv_{mn}\right|^\nu} H(\bfx;m,n)
\end{equation*}
for some function $H$ which, for certain constants $C$ and $M$
satisfies $|H(\bfx;m,n)|<C$ as long as $|\vv_{mn}|>M$.  Thus, the sum
in~\eqref{Gpak} converges to a limit $\tilde{G}^q_p(\bfx)$ as $A\to
+\infty$, and for $A>M$ we have
\begin{equation}
  \left| G^{p,A}(\bfx) - \tilde{G}^q_p(\bfx) \right|
  \,\leq \sum_{\stackrel{m,n\in\ZZ}{|m\vv_1+n\vv_2|> A}} \frac{C}{\left| \vv_{mn} \right|^\nu} 
  \,<\, \frac{C_p}{A^{\nu-2}}
\end{equation}
---a relation which establishes the desired result~\eqref{estimate_1}.

The proof of the bound~\eqref{estimate_2} follows in part the proof
of Theorem~2.1 in Part~I~\cite{BSTV1}.  For simplicity we assume $\tbfx=(x,y) = 0$
and $\boldsymbol{\alpha} = 0$; the extension of
the proof to nonzero values of these quantities is handled easily via
consideration of standard properties of the Fourier transform, as
described explicitly at the end of the proof in~\cite{BSTV1}.  Let $U$ denote the finite set
\begin{equation}\label{U_def}
  U = \left\{ j\vv^*_1+\ell\vv^*_2 : \gamma_{j\ell}=0 \right\}\subseteq \Lambda^*.
\end{equation}
This set is nonempty exactly when $k$ is a Wood frequency.  
Since $k\not=0$, one has $(0,0)\not\in U$.  The assumption $p\geq3$ implies
$\nu\geq3$.

Let $z_q = z+dq$.  We may
re-express~\eqref{GreenReflectedCutoff2} in the form
\begin{equation}
  4\pi \, \hat G^{p,A}(0,0,z) = \sum_{\rr\in\Lambda} \chi(\rr/A)
    \sum_{q=0}^p \frac{\exp\big(ik\sqrt{|\rr|^2+z_q^2\,}\,\big)}{\sqrt{|\rr|^2+z_q^2\,}}\,.
\end{equation}
Using the Poisson Summation Formula, this sum is transformed into a
lattice sum in the Fourier variable:
\begin{equation}
  4\pi \, \hat G^{p,A}(0,0,z)
  = \sum_{\xxi\in\Lambda^*}
  \mathcal{F} \left[  \chi(\rr/A)
    \sum_{q=0}^p \frac{\exp(ik\sqrt{|\rr|^2+z_q^2\,}\,)}{\sqrt{|\rr|^2+z_q^2\,}} \right] (\xxi) = S_1 + S_2
  \end{equation}
in which
\begin{align}
  & S_1= \sum_{\xxi\in\Lambda^*\setminus U} \sum_{q=0}^p
  \int_{\RR^2} \chi(\rr/A) \frac{\exp(ik\sqrt{|\rr|^2+z_q^2\,}\,)}{\sqrt{|\rr|^2+z_q^2\,}} e^{2\pi i\xxi\cdot\rr} d\rr \quad \mbox{and} \\
  & S_2 = \sum_{\xxi\in U} \int_{\RR^2} \chi(\rr/A) e^{2\pi
    i\xxi\cdot\rr} \left( \sum_{q=0}^p
  \frac{\exp(ik\sqrt{|\rr|^2+z_q^2\,}\,)}{\sqrt{|\rr|^2+z_q^2\,}}\right)
  d\rr.
\end{align}

The proof of in~\cite[Theorem~2.1]{BSTV1} establishes that the sum
$S_1$ ($U$ is empty in that work) converges superalgebraically to a
limit $L$, that is, it is equal to $L + \order{A^{-n}}$ for each
  positive integer $n$. Thus, it only remains to show that the sum
  $S_2$ converges, with the difference from its limit being of order $\order{A^{-(\nu-3/2)}}$
  (or, equivalently, $\order{A^{-(\lceil p/2 \rceil - 1/2)}}$) as
  $A\to +\infty$.  Each fraction in $S_2$ can be expressed as an
  exponential in $r=|\rr|$, multiplied by a Laurent expansion,
\begin{equation}
  \frac{\exp(ik\sqrt{r^2+z_q^2})}{\sqrt{r^2+z_q^2}}
  = \frac{e^{ikr}}{r}g_q(r)\, , \ \ \ \ \ \ \ \ \  g_q(r) = 1 + \sum_{j=1}^\infty a^q_j r^{-j}\,.
\end{equation}%

The coefficients $a^q_j$ depend on $z$, and the expansion is convergent when $r>|z_q|$.
In view of (\ref{h}) and (\ref{hbound}) we see that
\begin{equation}
  \sum_{q=0}^p g_q(r) = \frac{g(r)}{r^{\nu-1}}\quad\mbox{where}
  \quad g(r) = \sum_{j=0}^\infty a_j r^{-j}\,
  \quad\mbox{for}
  \quad r>z_q\quad(\mbox{with $a_0\not=0$}),
\end{equation}
and clearly
\begin{equation}
  g(Ar) \to a_0 \;\text{ as }\; A\to\infty,
\end{equation}
with uniform convergence over the set $r\geq r_1$.

In order to study the contribution by the sum $S_2$ we define the
polar coordinates $\rr=(r,\theta)$ and we note that, since $\xxi\in
U$, we must necessarily have $\xxi=(k/2\pi,\gamma)$. Then, using the
rescaling $\rho=r/A$ and the notation $\psi(\rho)=1-\chi(\rho)$, and
in view of the fact that $\nu\geq 3$, we obtain
\begin{multline}
  \int_{\RR^2} \chi(\rr/A) e^{2\pi i\xxi\cdot\rr} \left(\sum_{q=0}^p \frac{\exp(ik\sqrt{|\rr|^2+z_q^2\,}\,)}{\sqrt{|\rr|^2+z_q^2\,}}\right) d\rr\\
=  \int_{\RR^2}e^{2\pi i\xxi\cdot\rr} \left(\sum_{q=0}^p \frac{\exp(ik\sqrt{|\rr|^2+z_q^2\,}\,)}{\sqrt{|\rr|^2+z_q^2\,}}\right) d\rr
   -  \int_0^{2\pi}\hspace{-6pt}\int_0^\infty \psi(A^{-1} r) e^{ik r(\cos\theta+1)} \frac{g(r)}{r^{\nu-1}} dr d\theta \\
  = \tilde L \,-\, A^{2-\nu} \int_{r_1}^\infty \psi(\rho) \frac{g(A\rho)}{\rho^{\nu-1}} \left(\int_0^{2\pi} e^{iAk(\cos\theta+1)\rho} d\theta \right)d\rho\\
 = \tilde L \,-2\pi\, A^{2-\nu} \int_{r_1}^\infty \psi(\rho) \frac{g(A\rho)}{\rho^{\nu-1}} e^{iAk\rho}J_0(Ak\rho)d\rho,\label{fivesix}
\end{multline}
where $J_0$ denotes the Bessel function of order $0$. Clearly, the
number $\tilde L$ does not depend on $A$. In view of the well known
Bessel function asymptotics $J_0(x)\sim (2/x\pi)^{1/2}\cos(x-\pi/4),\ x\to +\infty$, and since $k\not=0$, we obtain the relation
\begin{equation}
  \int_{\RR^2} \chi(\rr/A) e^{2\pi i\xxi\cdot\rr} \left(\sum_{q=0}^p \frac{\exp(ik\sqrt{|\rr|^2+z_q^2\,}\,)}{\sqrt{|\rr|^2+z_q^2\,}}\right) d\rr
  \,=\, \tilde L \,+\, \order{A^{3/2-\nu}}\,. 
\end{equation}
It follows that the sum $S_2$ converges with an error of
$\order{1/A^{\lceil p/2 \rceil - 1/2}}$ as $A\to\infty$ (since $\nu =
\lceil p/2 \rceil + 1$).  Together with the superalgebraic convergence
of $S_1$ as $A\to\infty$, this fact
establishes~\eqref{estimate_2}. The proof is now complete. \hfill
$\blacksquare$


\subsection{Complete Green function in Fourier space\label{modified_fnct}}

In view of the Fourier expression~\eqref{Fouriersum} for the Green
function away from Wood anomalies we obtain the corresponding expression
\begin{equation*}
  \tilde{G}^q_p(\tbfx,z)
  = \frac{i}{2\dd} \sum_{j,\ell\in\ZZ} \frac{1}{\gamma_{j\ell}}e^{i\,\vv_{j\ell}^*\cdot\tbfx}\,
  \sum_{q=0}^p a_{pq} e^{i\gamma_{j\ell}|z+qd|}\,.
\end{equation*}
for the shifted Green function. For $z>0$ this expression can be made
to read
\begin{equation}\label{z_pos}
  \tilde{G}^q_p(\tbfx,z)
  = \frac{i}{2\dd} \sum_{j,\ell\in\ZZ} \frac{1}{\gamma_{j\ell}}e^{i\,\vv_{j\ell}^*\cdot\tbfx}\,e^{i\gamma_{j\ell}z}(1-e^{i\gamma_{j\ell}d})^p.
\end{equation}
In view of the limit 
$\lim_{\gamma_{j\ell}\to 0} \frac{(1-e^{i\gamma_{j\ell}d})^p}{\gamma_{j\ell}}=0$
for $p\geq 2$, we see that
that~\eqref{z_pos} can be evaluated even at Wood anomalies: letting
\begin{equation}\label{U_def2}
  U =\{(j,\ell)\in\ZZ^2\, |\, \gamma_{j\ell}=0\},
\end{equation}
we may continuously extend the function $\tilde{G}^q_p(\tbfx,z)$ to
all frequencies, including Wood configurations, by means of the
expression
\begin{equation}\label{z_pos_2}
  \tilde{G}^q_p(\tbfx,z)
  = \frac{i}{2\dd} \sum_{(j,\ell)\not \in U} e^{i\,\vv_{j\ell}^*\cdot\tbfx}\,e^{i\gamma_{j\ell}z}\frac{(1-e^{i\gamma_{j\ell}d})^p}{\gamma_{j\ell}}.
\end{equation}
Unfortunately, however, if $\gamma_{j\ell}=0$ for some $(j,\ell)$, the
corresponding Fourier component is not present in the Green
function~\eqref{z_pos_2} and, therefore, use of this Green function
cannot give rise to a uniquely solvable system of integral equations.
To tackle this difficulty we follow~\cite{BrunoDelourme} and introduce
a modified version $G^q_p$ of $\tilde{G}^{q}_p$, that is outgoing for
$z\to\infty$ (but not for $z\to-\infty$) and which contains all
necessary Fourier harmonics, even at Wood frequencies. The modified
Green function is given~by
%
\begin{equation*}
  {G}^q_p(\bfx) \;\;=\;\; \tilde{G}^q_p(\bfx)
  \;+\; v(\bfx),
\end{equation*}
where $v(\tbfx,z)$ denotes a solution of the homogeneous Helmholtz
equation of the form
\begin{equation}\label{hom_helm}
  v(\tbfx,z) = \frac{i}{2\dd} \sum_{(j,\ell)\in U} b_{j\ell}\, e^{i\vv^*_{j\ell}\cdot\tbfx} e^{i\gamma_{j\ell}\, z},
\end{equation}
where $b_{j\ell}\ne 0$ are arbitrary non-zero complex
constants. The function ${G}^q_p(\tbfx,z)$ is
$\boldsymbol{\alpha}$-quasi-periodic in $\tbfx$ with periods $\vv_1$
and $\vv_2$, it satisfies~(\ref{GqperHelmholtz}) and, crucially, it
contains all Fourier harmonics.

As shown in Section~\ref{uniqueness} and demonstrated numerically in
Section~\ref{sec:numerical}, the complete Green function ${G}^q_p$ can
be used to obtain uniquely-solvable integral-equation formulations
around Wood frequencies. The analysis presented in
Section~\ref{uniqueness} relies, in part, on use of yet another Green
function, namely, a {\em non-radiating} Green function defined by a
{\em slowly-convergent} series which, however, 1)~is well defined at
Wood frequencies; and 2)~unlike the shifted Green function
${G}^q_p$, is a Helmholtz solution for $z<0$, and is therefore
well suited for use as part of a proof that concerns the PDE domain
and its complement at a Wood frequency. This rather peculiar Green
function is introduced in the following section, and it is then used
in the uniqueness proof presented in Section~\ref{uniqueness}.

\subsection{ All space (non-radiating) Green function at and around Wood
  frequencies \label{non-rad}}
In view of the relation
\begin{equation}\label{zexp}
  \frac{e^{i\gamma_{j\ell}|z|}}{\gamma_{j\ell}} 
  = \frac{\cos(\gamma_{j\ell}\, z)}{\gamma_{j\ell}} + i\,\frac{\sin(\gamma_{j\ell}|z|)}{\gamma_{j\ell}},
\end{equation}
each term in the classical quasi-periodic Green
function~\eqref{Fouriersum} equals the sum of two quantities, the
first of which diverges and the second of which tends to $i|z|$ as
$\gamma_{j\ell}\to 0$.  In view of the relation~\eqref{z_pos_2}, both
terms can be made to vanish by using a $p$-th order finite-difference
of shifted Green functions. As an alternative, a direct removal of the
diverging term (which amounts to addition of a solution of Helmholtz'
equation) does produce a Helmholtz Green function for
$(k,\alpha,\beta)$ in a neighborhood of a given Wood anomaly triple
$(k_0,\alpha_0,\beta_0)$ at which $\gamma_{j\ell} = 0$. The resulting
Green function at the Wood configuration $(k_0,\alpha_0,\beta_0)$ is
thus given by
\begin{equation}\label{B_Green}
  B^q(\tbfx,z) :=\; \coeff \sum_{(j,\ell)\not\in U} e^{{i\vv_{j\ell}^*\cdot\tbfx}}
  \frac{1}{\gamma_{j\ell}} e^{i\gamma_{j\ell}|z|}
  \,+\, \coeff \sum_{(j,\ell)\in U} e^{{i\vv_{j\ell}^*\cdot\tbfx}}\; i |z|.
\end{equation}
This is not an outgoing Green function, on account of the terms that
contain~$|z|$ as a factor. But as indicated in the previous section,
the function $B^q(\tbfx,z)$ is a solution of the Helmholtz equation
except at the periodically distributed singular points
$(m\vv_1 + n\vv_2,0)$, and, unlike the shifted Green function
${G}^q_p$, it does not contain any additional singularities.

\subsection{Uniquely solvable integral equations around Wood frequencies\label{uniqueness}}

We seek a scattered field above $\Gamma$ in the form of a combined
single- and double-layer potential
\begin{equation}\label{combinedlayer}
  u(\bfx) = \int_{\Gamma^{per}} \left(i\eta {G}^q_p(\mathbf x-\mathbf{x}')+\xi\frac{\partial {G}^q_p(\mathbf x-\mathbf{x}')}{\partial{n}(\mathbf x')}\right)
  \phi(\mathbf{x}')ds(\bfx')  
\end{equation}
in terms of a quasi-periodic density $\phi$ defined on $\Gamma^{per}$,
where
\begin{equation}
  \Gamma^{per} = \left\{ (\tbfx,z) : \tbfx = a\vv_1+b\vv_2 \text{ with } 0\leq \left\{ a,b \right\} \leq 1,
    z = f(\tbfx) \right\}.
\end{equation}
The domain $\Gamma^{per}$ is that part of $\Gamma$ that lies above the
unit cell
\begin{equation}\label{Q}
  Q = \left\{ \tbfx = a\vv_1+b\vv_2 : 0\leq \left\{ a,b \right\} < 1 \right\},
\end{equation}
of the periodic lattice~$\Lambda$.

The function $u$ defined in~\eqref{combinedlayer} is quasi-periodic
and outgoing as $z\to\infty$, and it satisfies the Helmholtz equation
in $\RR^3\setminus\bigcup_{q=0}^p(\Gamma^{per}-(0,0,qd))$. This
function is a solution of~\eqref{helmholtz} if and only if $\phi$
solves the integral equation
\begin{equation}\label{eq:SLfEPer}
\frac{\xi\phi(\mathbf x)}{2}+\int_{\Gamma^{per}}\left(i\eta {G}^q_p(\mathbf x-\mathbf{x}')+\xi\frac{\partial {G}^q_p(\mathbf x-\mathbf{x}')}{\partial{n}(\mathbf x')}\right)\phi(\mathbf{x}')ds(\mathbf{x}')
=-e^{i(\boldsymbol{\alpha}\cdot\tbfx-\gamma z)},\quad \bfx\in\Gamma^{per}.
\end{equation}
The following theorem establishes that this integral equation is
uniquely solvable as long as the shift distance $d>0$ is selected in
such a way that the restrictions
\begin{equation}\label{consts}
\left(1-e^{i\gamma_{j\ell}d}\right)\ne 0\quad \mbox{for all}\quad (j,\ell)\in\ZZ^2
\end{equation}
are satisfied. It is important to note that, since $\gamma_{j\ell}$ is
an imaginary quantity for $j,\ell$ large enough,
equation~\eqref{consts} amounts to a finite number of
constraints. 

\begin{Theorem}\label{thm:uniqueness}
  Let $p\geq 0$, let $\xi\ne 0$ and $\eta\ne 0$ denote real numbers
  satisfying $\eta/\xi<0$, and let $d>0$ be such that~\eqref{consts}
  holds. Then equation~\eqref{eq:SLfEPer} admits a unique solution for
  all triples $(k,\alpha,\beta)$ of wavenumbers, including Wood
  anomalies.
\end{Theorem}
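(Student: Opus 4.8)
The plan is to prove uniqueness via the standard combined-field boundary-integral argument adapted to the quasi-periodic Wood setting, and then to obtain existence from uniqueness through Fredholm theory. First I would establish uniqueness of the \emph{scattering problem} itself: suppose $u$ is a solution of the homogeneous problem \eqref{helmholtz} (i.e.\ with $u^{inc}=0$ on $\Gamma$), quasi-periodic and outgoing as $z\to+\infty$ in the sense of \eqref{outgoing1}. Working in the region $\Omega^+$ truncated to one period in $\tbfx$ and to $z\le z_+ + T$ for large $T$, I would apply Green's first identity to $u$ and $\bar u$. The quasi-periodicity makes the lateral boundary contributions on the sides of the unit cell cancel in conjugate pairs, so only the top horizontal cut at $z=z_++T$ and the surface $\Gamma^{per}$ survive. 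On $\Gamma^{per}$ we have $u=0$, killing that term. On the top, inserting the Rayleigh expansion \eqref{outgoing1} and integrating over one period, the orthogonality of the exponentials $e^{i\vv_{j\ell}^*\cdot\tbfx}$ reduces the flux $\Im\int u\,\partial_z\bar u$ to a sum over the modes; the propagating modes ($\gamma_{j\ell}$ real) contribute $|\gamma_{j\ell}|\,|B^+_{j\ell}|^2$, the evanescent ones decay, and—this is the key point for Wood frequencies—the \emph{grazing} modes with $\gamma_{j\ell}=0$ contribute zero flux. Taking the imaginary part of Green's identity then forces $B^+_{j\ell}=0$ for every propagating order. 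A Rellich/unique-continuation argument (the lemma in Appendix~\ref{sec:integral}) upgrades this to $u\equiv 0$ in $\Omega^+$, even when grazing modes are present: the grazing modes individually satisfy the Helmholtz equation but do not decay, so one must argue that a nonzero grazing coefficient is incompatible with $u=0$ on the \emph{smooth, genuinely curved} surface $\Gamma$.

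Next I would convert uniqueness of the scattering problem into uniqueness for the integral equation \eqref{eq:SLfEPer}. The combined-layer representation \eqref{combinedlayer} built from ${G}^q_p$ produces a field $u$ that is quasi-periodic, outgoing as $z\to+\infty$, and Helmholtz away from $\Gamma^{per}$ and its downward shifts. If $\phi$ solves the homogeneous equation \eqref{eq:SLfEPer}, then $u$ solves the homogeneous scattering problem, so $u\equiv 0$ in $\Omega^+$ by the previous step. The jump relations for the single- and double-layer potentials across $\Gamma^{per}$ then give, on the interior side, a field $u^-$ satisfying Helmholtz in the region immediately below $\Gamma$ with boundary data controlled by the jumps $[\![u]\!]=\xi\phi$ and $[\![\partial_n u]\!]=-i\eta\phi$. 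Here the choice $\eta/\xi<0$ enters exactly as in the classical Burton--Miller argument: evaluating the interior energy identity and using the sign of $\eta/\xi$ forces $\phi\equiv 0$. The subtlety specific to Wood frequencies is that ${G}^q_p$ is \emph{not} a Helmholtz solution below $\Gamma$ (it has the downward-shifted singularities at $(0,0,-qd)$ and the added grazing modes $v$); this is precisely why the hypothesis \eqref{consts} is needed—it guarantees via \eqref{z_pos} that the shifted construction did not annihilate any mode that must reappear in $v$, so ${G}^q_p$ genuinely contains all Fourier harmonics and the interior/exterior energy balance closes.

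For existence I would invoke the Fredholm alternative. The integral operator in \eqref{eq:SLfEPer} is of the form $\tfrac{\xi}{2}I + K$ where $K$ is a boundary-integral operator with a weakly singular kernel on the smooth compact surface $\Gamma^{per}$; the downward-shifted and grazing-mode contributions to ${G}^q_p$ are smooth on $\Gamma^{per}$ (their singularities lie strictly below the surface) and hence contribute compact, even smoothing, operators. Thus $K$ is compact on $L^2(\Gamma^{per})$ (or the appropriate Sobolev space), and $\tfrac{\xi}{2}I+K$ is Fredholm of index zero. Uniqueness (trivial kernel) then yields existence and boundedness of the inverse, completing the proof.

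\medskip
\textbf{Anticipated main obstacle.} The hard part will be the grazing-mode step in the uniqueness argument for the scattering problem. At a Wood frequency the outgoing radiation condition \eqref{outgoing1} admits modes with $\gamma_{j\ell}=0$ that neither propagate energy nor decay, so the standard ``zero-flux forces zero field'' argument controls only the strictly propagating orders and says nothing directly about a surviving grazing coefficient $B^+_{j\ell}$ with $\gamma_{j\ell}=0$. Ruling out such a nonzero grazing term requires genuinely using the geometry of $\Gamma$ (smoothness and non-degenerate curvature) together with a unique-continuation or Rellich-type lemma—this is exactly the content I would defer to Appendix~\ref{sec:integral}. Everything else (energy identities, jump relations, compactness) is standard; it is the grazing mode that makes the Wood-frequency case genuinely new.
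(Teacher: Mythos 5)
Your high-level skeleton (Fredholm reduction on the smooth surface, exterior Dirichlet uniqueness giving $u\equiv 0$ in $\Omega^+$, a sign argument from $\eta/\xi<0$ below $\Gamma$) matches the paper's, but the step carrying you from $u\equiv 0$ in $\Omega^+$ to $\phi\equiv 0$ is broken for $p\geq 1$ --- which is precisely the case needed at Wood anomalies. You propose to apply the jump relations to the potential \eqref{combinedlayer} itself and run an interior Burton--Miller energy identity on $u^-$. But the kernel ${G}^q_p$ has poles on the shifted surfaces $\Gamma^{per}-(0,0,qd)$, $1\leq q\leq p$, and is radiating only as $z\to+\infty$, not as $z\to-\infty$; consequently $u^-$ is not a Helmholtz solution in $\Omega^-$ and does not satisfy \eqref{outgoing2}, so it meets neither hypothesis of the impedance lemma (Lemma~\ref{lemma:uniqueness}) and the interior energy identity cannot be closed. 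Your parenthetical claim that hypothesis~\eqref{consts} makes ``the interior/exterior energy balance close'' is not a mechanism: what \eqref{consts} together with $b_{j\ell}\ne 0$ actually buys is only that $u\equiv 0$ for $z>z_+$ forces every moment $c^+_{j\ell}$ of $\phi$ to vanish, because each Fourier coefficient \eqref{u_jl} of $u$ is $c^+_{j\ell}$ times a nonzero factor. You also mislocate the ``main obstacle'': exterior Dirichlet uniqueness at Wood frequencies, grazing modes included, is not established by a new curvature/Rellich argument in the appendix --- the paper simply cites the classical result~\cite{Petit} --- and Lemma~\ref{lemma:uniqueness} is an impedance-boundary uniqueness lemma (in whose flux identity grazing modes harmlessly contribute zero), not a tool for killing grazing Rayleigh coefficients.

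The idea your plan is missing is the auxiliary \emph{non-radiating} all-space Green function $B^q$ of Section~\ref{non-rad}. Having deduced $c^+_{j\ell}=0$ for all $(j,\ell)$, the paper forms a second potential $v$ in \eqref{u0} with the \emph{same} density $\phi$ but kernel $B^q$, which has no shifted poles and solves the Helmholtz equation on both sides of $\Gamma$. Its Fourier coefficients above $\Gamma$ are built from the same moments: $c^+_{j\ell}\,\gamma_{j\ell}^{-1}e^{i\gamma_{j\ell}z}$ for non-grazing orders and $i\big(c^+_{j\ell}z-c'_{j\ell}\big)+c''_{j\ell}$ for grazing ones, so $c^+_{j\ell}=0$ makes $v$ independent of $z$ for $z>z_+$ and, by analytic continuation, everywhere above $\Gamma$. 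Subtracting the residual constant grazing combination yields $\tilde v$ that vanishes above $\Gamma$, is radiating at both $+\infty$ and $-\infty$ (the $|z|$ growth in $B^q$ drops out of $v$), and by the jump relations satisfies the homogeneous impedance condition $\partial\tilde v/\partial n+(i\eta/\xi)\tilde v=0$ on $\Gamma$; only now does Lemma~\ref{lemma:uniqueness} with $\zeta=-\eta/\xi>0$ apply below $\Gamma$, giving $\tilde v\equiv 0$ and hence $\phi\equiv 0$. Without this detour through $B^q$ your argument is exactly the simplified proof in Appendix~\ref{sec:integral}, which the paper explicitly restricts to $p=0$ and configurations away from Wood anomalies.
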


\begin{proof}
  Given that the surface $\Gamma$ is smooth and that $G^{q}_p$ has the
  same singularity as $G$, it follows that the integral operators on
  the left-hand side of equation~\eqref{eq:SLfEPer} are compact
  operators in the space $L^2(\Gamma^{per})$. Thus, by the Fredholm
  theory, the unique solvability of equation~\eqref{eq:SLfEPer} is
  equivalent to the injectivity of the operator on the left-hand side
  of that equation. In order to establish injectivity, and thereby
  complete the proof of the theorem, in what follows we show that any
  solution $\phi\in L^2(\Gamma^{per})$ of the homogeneous equation
\begin{equation}\label{eq:SLfEPer_hom}
  \frac{\xi\phi(\mathbf x)}{2}+\int_{\Gamma^{per}}\left(i\eta {G}^q_p(\mathbf x-\mathbf{x}')+\xi\frac{\partial {G}^q_p(\mathbf x-\mathbf{x}')}{\partial{n}(\mathbf x')}\right)\phi(\mathbf{x}')ds(\mathbf{x}')
  =0,\quad \bfx\in\Gamma^{per},
\end{equation}
must necessarily vanish.

Let $\phi$ satisfy~\eqref{eq:SLfEPer_hom} and let $u$ denote the
corresponding Helmholtz solution~\eqref{combinedlayer}. It follows
that $u$ vanishes on $\Gamma^{per}$. Since $u$ satisfies the outgoing
radiation condition~\eqref{outgoing1} at $+\infty$ (because
${G}^q_p(\bfx)$ does), by uniqueness of solution of the Dirichlet
problem~\eqref{helmholtz}, which holds even at Wood
frequencies~\cite{Petit}, it follows that $u(\bfx)=0$ for all
$\bfx\in\Omega^+$. In particular, letting $u_{j\ell}(z)$ denote the
Fourier coefficients of the doubly periodic function
$u(\tbfx,z)e^{-i\boldsymbol{\alpha}\cdot\tbfx}$ with respect to
$\tbfx$ for $z>z_+$, we have $u_{j\ell}(z)=0$ for all $j,\ell\in\ZZ$.

  Given that for $z>0$
\begin{equation*}
  G^q_p(\tbfx,z)=\frac{i}{2\dd} \sum_{(j,\ell)\not \in U} e^{i\,\vv_{j\ell}^*\cdot\tbfx}\frac{(1-e^{i\gamma_{j\ell}d})^p}{\gamma_{j\ell}}\,e^{i\gamma_{j\ell}z}+\frac{i}{2\dd} \sum_{(j,\ell)\in U} b_{j\ell}\, e^{i\vv^*_{j\ell}\cdot\tbfx} e^{i\gamma_{j\ell}\, z}\,
\end{equation*}
(see Section~\ref{modified_fnct}) it follows that 
\begin{equation}\label{u_jl}
u_{j\ell}(z) =
\begin{cases}
  c_{j\ell}^{+} \frac{(1-e^{i\gamma_{j\ell}d})^p}{\gamma_{j\ell}}
  e^{i\gamma_{j\ell}z},
  &\mbox{for}\quad(j,\ell)\not\in U \\
  c_{j\ell}^{+}b_{j\ell} &\mbox{for}\quad(j,\ell)\in U,
\end{cases}
\end{equation}
in which
\begin{equation}
  c_{j\ell}^{+} = \frac{1}{2\dd}\int_\Gamma \phi(\bfx') e^{-{i\vv_{j\ell}^*\cdot\tbfx'}} e^{-i\gamma_{j\ell}z'}\left[\xi(\vv_{j\ell}^*,\gamma_{j\ell})\cdot \mathbf{n}(\bfx')   -\eta\right]    ds(\bfx').
\end{equation}
Under the assumptions of this theorem, $u$ may vanish only if
$c_{j\ell}^{+}=0$ for all $j,\ell\in\ZZ$.

Using the non-radiating Green function introduced in
Section~\ref{non-rad} we then set
\begin{equation}\label{u0}
  v(\bfx) \,=\, \int_\Gamma 
  \left(
    i\eta\, B^q(\bfx-\bfx') + 
    \xi\, \frac{\partial B^q(\bfx-\bfx')}{\partial{n}(\mathbf x')}
  \right)\phi(\bfx')
  ds(\bfx')\,,\quad \mathbf{x}\not\in\Gamma. 
\end{equation}
In view of~\eqref{B_Green}, for $z>z_+$ the function $v$ admits the
Fourier expansion
\begin{equation*}
  v(\tbfx,z) = \sum_{j,\ell\in\ZZ} v^+_{j\ell}(z)\,e^{{i\vv_{j\ell}^*\cdot\tbfx}} \qquad (z>z_+)\,,
\end{equation*}
in which
\begin{align}\label{v+}
  & v^+_{j\ell}(z) \,=\, c_{j\ell}^{+} \frac{1}{\gamma_{j\ell}} e^{i\gamma_{j\ell}z} &\text{for } (j,\ell) \not\in U, \\
  & v^+_{j\ell}(z) \,=\, i\big(c_{j\ell}^{+} z - c'_{j\ell} \big) +c''_{j\ell} &\text{for } (j,\ell)\in U,
\end{align}
in which (recalling that $|z|=z-z'$ when $\bfx$ is above $\Gamma$, or $z>f(\tbfx)$)
\begin{align}
  &c'_{j\ell} = \frac{1}{2\dd}
    \left[
    -\eta\int_\Gamma\phi(\bfx') \,z'\, e^{-{i\vv_{j\ell}^*\cdot\tbfx'}}ds(\bfx')\; + \;
    \xi\int_\Gamma\phi(\bfx') (\vv_{j\ell}^*,0)\cdot \mathbf{n}(\bfx')\,z'\, e^{-{i\vv_{j\ell}^*\cdot\tbfx'}}ds(\bfx')
    \right],\\
  & c''_{j\ell} = \frac{\xi}{2\dd}\int_\Gamma\phi(\bfx') (0,0,1)\cdot \mathbf{n}(\bfx')\, e^{-{i\vv_{j\ell}^*\cdot\tbfx'}}ds(\bfx')\,.
\end{align}
Given that $c_{j\ell}^{+}=0$ for all $j,\ell\in\mathbb{Z}$, it follows that
\begin{equation*}
  v^+_{j\ell}(z) =
  \renewcommand{\arraystretch}{1.1}
\left\{
  \begin{array}{ll}
     0\,, & (j,\ell)\not\in U\,, \\
     -ic'_{j\ell}+c''_{j\ell}\,, & (j,\ell)\in U\,.
  \end{array}
\right.
\end{equation*}
Therefore
$v(\tbfx,z) \,=\, \sum_{(j,\ell)\in U} (-ic'_{j\ell}+c''_{j\ell})\,
e^{i\vv_{j\ell}^*\cdot\tbfx}$ for $z>z_+$.  The field $v(\bfx)$
satisfies the Helmholtz equation for $\bfx\not\in\Gamma$, and
$v(\bfx)$ is independent of $z$ for $z>z_+$.  In view of the
real-analyticity of $v(\bfx)$ for $\bfx\not\in\Gamma$ and the
uniqueness of analytic continuation, it follows that $v(\bfx)$ is
independent of~$z$ everywhere above $\Gamma$ and we have
\begin{equation}\label{u0above}
  v(\tbfx,z) \,=\, \sum_{(j,\ell)\in U}(-i c'_{j\ell}+c''_{j\ell})\, e^{i\vv_{j\ell}^*\cdot\tbfx} \qquad \text{for } z\geq f(\tbfx)\,.
\end{equation}

Now we turn our attention to the Fourier expansion
\begin{equation*}
  v(\tbfx,z) = \sum_{j,\ell\in\ZZ}v_{j\ell}^-(z) e^{i\vv_{j\ell}^*\cdot\tbfx}
  \quad ,\quad z<z_-
\end{equation*}
of the function $v$ in the region $z<z_-$. For $(j,\ell)\in U$, the
Fourier coefficients in this expansion satisfy
$v_{j\ell}^-(z) = -v_{j\ell}^+$, since the term $|z-z'|$ in
$B^q(\bfx-\bfx')$ equals $-(z-z')$ for $z<z'$. (There is no such
relation for $(j,\ell)\not\in U$.) For $(j,\ell)\in U$ we thus have
\begin{equation*}
  v_{j\ell}^-(z) = ic'_{j\ell} - c''_{j\ell}\,.
\end{equation*}
The function $v(\bfx)$ satisfies the radiation condition~\eqref{outgoing2}
for $z<z_-$ since, in spite of linear terms that are part of the Green
function $B^q$, $v$ itself does not contain such linear
growth for $z<z_-$.

The right-hand side expression in~\eqref{u0above}, considered as a
function defined on all of $\RR^3$, defines a Helmholtz field that
satisfies the outgoing condition for $z\to\infty$ {\itshape and}
$z\to-\infty$.  Thus, by subtracting it from $v(\bfx)$, one obtains
the field
\begin{equation*}
  \tilde v(\tbfx,z) := v(\tbfx,z) - \sum_{(j,\ell)\in U} (-ic'_{j\ell} + c''_{j\ell}) e^{{i\vv_{j\ell}^*\cdot\tbfx}}
  \qquad \text{for } (\tbfx,z)\in\RR^3
\end{equation*}
that satisfies the radiation conditions at both $+\infty$ and
$-\infty$, and that vanishes for $z>f(\tbfx)$.  The jump conditions of
the single- and double-layer potentials that define $v(\bfx)$ in
(\ref{u0}) imply that the limits of $\tilde v(\bfx)$ and its normal
derivative from below $\Gamma$ are
\begin{eqnarray}\label{utilde0limits}
  \tilde v(\tbfx,f(\tbfx)\!-\!0) &=& -\xi\,\phi(\tbfx,f(\tbfx))\,,\\
  \displaystyle{\frac{\partial\tilde v}{\partial{n}(\bfx)}(\tbfx,f(\tbfx)\!-\!0)}
  &=& i\eta\,\phi(\tbfx,f(\tbfx))\,.\label{utilde0limits_2}
\end{eqnarray}
It follows that the function $\tilde v(\bfx)$ restricted to the domain
$\{z\leq f(\tbfx)\}$ satisfies the homogeneous impedance boundary
condition
\begin{equation*}
  \frac{\partial\tilde v}{\partial{n}(\bfx)}(\tbfx,f(\tbfx))+\frac{i\eta}{\xi}\tilde v(\tbfx,f(\tbfx)) =0\quad \mbox{on $\Gamma$}.
\end{equation*}
Since, per the discussion above, $\tilde v$ additionally satisfies the
radiation condition~\eqref{outgoing2}, Lemma~\ref{lemma:uniqueness}
tells us that we must have $\tilde v(\bfx)=0$ for all $\bfx$ below
$\Gamma$.  In view of~\eqref{utilde0limits}
and/or~\eqref{utilde0limits_2} and the assumption $\eta/\xi<0$ it
follows that $\phi(\bfx)=0$ for all $\bfx\in\Gamma$.
\end{proof}

\begin{table}
\begin{center}
\begin{tabular}{|c|c|c|c|c|c|c|c|}
\hline
$k$ & Unknowns & $A$ & $\max|G^A-G^{ref}|$&Iter & $\varepsilon_1$ & $\varepsilon$ \\
\hline
$1$ & $16\ \times\ 16$ &30 & 4.4 $\times$ $10^{-2}$ & 13 & 1.0 $\times$ $10^{-1}$& 1.8 $\times$ $10^{-1}$ \\
$1$ & $16\ \times\ 16$ & 60 & 3.1 $\times$ $10^{-3}$ &13 & 6.3 $\times$ $10^{-3}$& 6.6 $\times$ $10^{-3}$ \\
$1$ & $16\ \times\ 16$ & 120 & 4.3 $\times$ $10^{-4}$ & 13 & 4.8 $\times$ $10^{-4}$& 4.3 $\times$ $10^{-4}$ \\
$1$ & $16\ \times\ 16$ & 160 & 3.7 $\times$ $10^{-5}$ & 13 & 1.3 $\times$ $10^{-4}$& 2.3 $\times$ $10^{-4}$ \\
$1$ & $16\ \times\ 16$ & 240 & 2.4 $\times$ $10^{-6}$ & 13 & 4.9 $\times$ $10^{-6}$& 6.7 $\times$ $10^{-6}$ \\
\hline
\end{tabular}
\caption{\label{errors1} Convergence of the $p=0$ Dirichlet solver
  (unshifted) as $A$ grows, for a configuration away from Wood
  anomalies. Normal incidence ($\boldsymbol{\alpha}=0$) was assumed
  for this example. The reference solution was produced using
  $A=A_{ref}=320$ and $16\ \times\ 16$ unknowns, for which
  $\varepsilon=1.0\times 10^{-6}$.}
\end{center}
\end{table}

\begin{table}
\begin{center}
\begin{tabular}{|c|c|c|c|c|c|c|c|c|}
\hline
$k$ & Unknowns & $A$ & \multicolumn{3}{c|}{$G^A$} &  \multicolumn{3}{c|}{$G^{A,p},p=3$}\\
\cline{4-9}
& & & Iter & $\varepsilon_1$ & $\varepsilon$ & Iter &$\varepsilon_1$ & $\varepsilon$ \\
\hline
$6$ & $16\ \times\ 16$ &30 & 16 & 4.9 $\times$ $10^{-3}$& 1.6 $\times$ $10^{-3}$ & 12 & 6.5 $\times$ $10^{-3}$& 1.2 $\times$ $10^{-2}$\\
$6$ & $16\ \times\ 16$ & 60 & 16 & 1.5 $\times$ $10^{-3}$& 4.1 $\times$ $10^{-4}$ & 12 & 3.8 $\times$ $10^{-4}$& 1.5 $\times$ $10^{-5}$\\
$6$ & $16\ \times\ 16$ & 80 & 16 & 2.8 $\times$ $10^{-5}$& 1.1 $\times$ $10^{-5}$ & 12 & 4.7 $\times$ $10^{-6}$& 2.3 $\times$ $10^{-6}$\\
\hline
\end{tabular}
\caption{\label{errors11} Convergence of the $p=3$ Dirichlet solver
  with shift parameter $d=2.4$, away from Wood frequencies, as $A$
  grows. Normal incidence. The reference solution was produced using
  corresponds to $A=A_{ref}=120$, $32\ \times\ 32$ unknowns, for which
  $\varepsilon=4.0\times 10^{-7}$.}
\end{center}
\end{table}

\section{High-order numerical evaluation of the boundary-layer
  potentials with quasi-periodic Green
  functions}\label{sec:evaluation}

For our numerical treatment we reformulate the quasi-periodic
scattering integral equation~\eqref{eq:SLfEPer} in terms of only {\em periodic} functions.
We use the fact that the solution $\phi = \phi^{qper}$ of the integral
equation~\eqref{eq:SLfEPer} is $\boldsymbol{\alpha}$-quasi-periodic with
respect to the lattice $\Lambda$, and that, therefore, the quantity
$$\phi^{per}(\tbfx)\;=\;e^{-i\boldsymbol{\alpha}\cdot\tbfx}\,\phi^{qper}(\tbfx)$$ 
is periodic with respect to $\Lambda$. This allows us to
express the integral equation~\eqref{eq:SLfEPer} in the form
\begin{equation}\label{eq:SLfEPer1}
  \frac{\xi\phi^{per}(\mathbf{x})}{2} + \int_{\Gamma^{per}}\left(\xi\frac{\partial G^{per}_p(\mathbf x-\mathbf{x}')}{\partial{n}(\mathbf x')}+i\eta G^{per}_p(\mathbf x-\mathbf{x}')\right)\phi^{per}(\mathbf{x}')ds(\mathbf{x}') = 
  -e^{-i\gamma f(\tbfx)},\quad \tbfx \in \Gamma^{per},
\end{equation}
where the $\Lambda$-{\em periodic} Green function $G^{per}_p$ is defined by
\begin{equation}\label{eq:Greenp1}
G^{per}_p(\mathbf{x},\mathbf{x}')=G^{q}_p(\mathbf{x},\mathbf{x}')
e^{i\boldsymbol{\alpha}\cdot(\tbfx'-\tbfx)}.
\end{equation}
In practice, for any given $p=0,1,\dots$ the quantity $\hat{G}^{p,A}$
(equation~\eqref{GreenReflectedCutoff2}) is used as an approximation
for $G^{q}_p$ which, when subsituted in~\eqref{eq:Greenp1}, results in
the needed numerical approximation of $G^{per}_p$.  We solve
equation~\eqref{eq:SLfEPer1} by means of the unaccelerated high-order
Nystr\"om procedure introduced in~\cite{BrunoKunyansky} and
Part~I. For all the numerical experiments presented in
Section~\ref{sec:numerical} a single patch was used to represent the
biperiodic surfaces under consideration.

\begin{table}
\begin{center}
\begin{tabular}{|c|c|c|c|c|c|c|}
\hline
$k$ & Unknowns & $A$ & Iter & $\varepsilon_1$ & $\varepsilon$ \\
\hline
$2 \pi$ & $24\ \times\ 24$ & 20 & 19 & 3.2 $\times$ $10^{-2}$& 1.7 $\times$ $10^{-2}$ \\
$2 \pi$ & $24\ \times\ 24$ & 30 & 19 & 2.7 $\times$ $10^{-3}$& 4.7 $\times$ $10^{-3}$ \\
$2 \pi$ & $24\ \times\ 24$ & 40 & 19 & 6.9 $\times$ $10^{-4}$& 4.0 $\times$ $10^{-4}$ \\
$2 \pi$ & $24\ \times\ 24$ & 60 & 19 & ref & 2.4 $\times$ $10^{-6}$ \\
\hline
$2 \pi\pm 10^{-6}$ & $24\ \times\ 24$ & 40 & 19 & 6.8 $\times$ $10^{-4}$& 4.3 $\times$ $10^{-4}$ \\
\hline
\hline
$2\sqrt{2}\pi$ & $24\ \times\ 24$ & 30 & 25 & 1.3 $\times$ $10^{-2}$& 1.5 $\times$ $10^{-2}$ \\
$2\sqrt{2}\pi$ & $24\ \times\ 24$ & 40 & 25 & 6.8 $\times$ $10^{-3}$& 5.5 $\times$ $10^{-3}$ \\
$2\sqrt{2}\pi$ & $24\ \times\ 24$ & 80 & 25 & 8.9 $\times$ $10^{-5}$& 2.1 $\times$ $10^{-4}$ \\
$2\sqrt{2}\pi$ & $24\ \times\ 24$ & 120 & 25 & ref & 3.8 $\times$ $10^{-5}$ \\
\hline
$2\sqrt{2}\pi\pm 10^{-6}$ & $24\ \times\ 24$ & 30 & 25 & 1.6 $\times$ $10^{-2}$& 2.5 $\times$ $10^{-2}$ \\
$2\sqrt{2}\pi\pm 10^{-6}$ & $24\ \times\ 24$ & 80 & 25 & 8.9 $\times$ $10^{-5}$& 1.5 $\times$ $10^{-4}$ \\
\hline
\hline
$4\pi$ & $32\ \times\ 32$ & 30 & 28 & 4.6 $\times$ $10^{-2}$& 4.9 $\times$ $10^{-2}$ \\
$4 \pi$ & $32\ \times\ 32$ & 60 & 28  & 2.4 $\times$ $10^{-3}$& 1.1 $\times$ $10^{-3}$ \\
$4 \pi$ & $32\ \times\ 32$ & 180 & 28  & ref  & 2.2 $\times$ $10^{-4}$ \\
\hline
\end{tabular}
\caption{\label{errors3} Convergence, as $A$ grows, of the $p=3$
  Dirichlet solver at and around Wood frequencies. Shift parameter
  $d=1.4$, GMRES residual tolerance equal to $10^{-6}$.  ``Ref" refers to finely resolved solutions against which the error of the coarser solutions is evaluated.}
\end{center}
\end{table}

\begin{table}
\begin{center}
\begin{tabular}{|c|c|c|c|c|c|c|}
\hline
$k$ & Unknowns & $A$ & Iter & $\varepsilon_1$ & $\varepsilon$ \\
\hline
$2 \pi$ & $24\ \times\ 24$ & 20 & 19 & 5.3 $\times$ $10^{-3}$& 7.4 $\times$ $10^{-3}$ \\
$2 \pi$ & $24\ \times\ 24$ & 30 & 19 & 5.7 $\times$ $10^{-4}$& 1.7 $\times$ $10^{-3}$ \\
$2 \pi$ & $24\ \times\ 24$ & 40 & 19 & 1.1 $\times$ $10^{-4}$& 3.7 $\times$ $10^{-4}$ \\
$2 \pi$ & $24\ \times\ 24$ & 80 & 19 & ref & 4.5 $\times$ $10^{-6}$ \\
\hline
$2 \pi\pm 10^{-6}$ & $24\ \times\ 24$ & 40 & 19 & 1.1 $\times$ $10^{-4}$& 3.6 $\times$ $10^{-4}$ \\
\hline
\hline
$2\sqrt{2}\pi$ & $24\ \times\ 24$ & 30 & 25 & 7.3 $\times$ $10^{-2}$& 5.1 $\times$ $10^{-2}$ \\
$2\sqrt{2}\pi$ & $24\ \times\ 24$ & 40 & 25 & 4.1 $\times$ $10^{-3}$& 2.8 $\times$ $10^{-3}$ \\
$2\sqrt{2}\pi$ & $24\ \times\ 24$ & 80 & 25 & 2.6 $\times$ $10^{-4}$& 3.4 $\times$ $10^{-4}$ \\
$2\sqrt{2}\pi$ & $24\ \times\ 24$ & 160 & 25 & ref & 4.2 $\times$ $10^{-5}$ \\
\hline
$2\sqrt{2}\pi\pm 10^{-6}$ & $24\ \times\ 24$ & 80 & 25 & 2.5 $\times$ $10^{-4}$& 3.5 $\times$ $10^{-4}$ \\
\hline
\hline
$4\pi$ & $32\ \times\ 32$ & 30 & 28 & 1.2 $\times$ $10^{-1}$& 4.5 $\times$ $10^{-2}$ \\
$4 \pi$ & $32\ \times\ 32$ & 60 & 28  & 2.7 $\times$ $10^{-3}$& 1.6 $\times$ $10^{-3}$ \\
$4 \pi$ & $32\ \times\ 32$ & 180 & 28  & ref & 1.1 $\times$ $10^{-4}$ \\
\hline
\end{tabular}
\caption{\label{errors4} Convergence, as $A$ grows, of the $p=3$
  Neumann solver at and around Wood frequencies. Shift parameter
  $d=1.4$, GMRES residual tolerance equal to $10^{-6}$.  ``Ref" refers to finely resolved solutions against which the error of the coarser solutions is evaluated.}
\end{center}
\end{table}

\section{Numerical results}\label{sec:numerical}
We present numerical computations of scattering by the doubly periodic
scattering surface $f(x,y)=\frac{1}{2}\cos(2 \pi x)\cos(2 \pi y)$ with
periodicity lattice vectors $\vv_1=(1,0,0)$ and $\vv_2=(0,1,0)$ and
under Dirichlet and Neumann boundary conditions.  For non-Wood
configurations we utilize the $p=0$ (unshifted) version of the
algorithm described in Section~\ref{sec:evaluation}.  At and around
Wood configurations, on the other hand, we use the $p=3$ version of
that algorithm.  A fully three-dimensional single-core Matlab
implementation of these methods was used, which was neither
accelerated nor optimized; accordingly, our numerical error studies at
Wood anomalies do not go beyond relative errors of the order
$10^{-4}$. Clear high-order convergence is observed in all cases.

We report the quality of the solutions on the basis of two error
indicators. The first of these indicators is the energy-conservation
defect
\begin{equation}
  \varepsilon=\left|\sum_{(j,\ell)\in P}\frac{\gamma_{j\ell}}{\gamma_{00}}|B_{j\ell}|^2-1\right|
\end{equation}
which we have verified (by means of numerical resolution studies) to
be an excellent error predictor for these solvers.  An additional
error estimator we present, $\varepsilon_1$, on the other hand, equals
the absolute error in the Rayleigh coefficient $B^+_{0,0}$ (as
estimated by comparison with a reference solution obtained by means of
a highly-refined discretization, a large value of the window parameter
$A$, and a sufficiently small GMRES tolerance). The word ``ref'' on a
table entry indicates that the parameter values on that row were used
to produce the reference solution necessary for evaluation of the
errors $\varepsilon_1$ for the corresponding frequency $k$ on the that
table. The numbers of iterations required by the GMRES solvers to
reach specified tolerances are provided in each case.

Table~\ref{errors1} demonstrates the high-order character, as the
window-size parameter $A$ grows, for the proposed $p=0$ (unshifted)
Dirichlet solvers at frequencies $k$ away from Wood anomalies.
Table~\ref{errors11} demonstrates the high-order character
of the $p=3$ (shifted) solver, as $A$ grows, also under
Dirichlet boundary conditions and for values of $k$ away from Wood
anomalies.  Tables~\ref{errors3} and~\ref{errors4}, in turn, concern
configurations at and near Wood anomalies; Dirichlet (resp.  Neumann)
boundary conditions are considered in the first (resp. second) of
these tables. In the normal-incidence case considered in those tables,
the first three Wood anomalies occur at $k=2\pi$, $k=2\sqrt{2}\pi$,
and $k=4\pi$. Once again, fast convergence is observed as $A$ grows,
even at and around Wood anomalies.  We note that the number of
iterations required by the GMRES solvers based on Combined Field
Integral Equations remains small even for Wood and near-Wood
parameters for both Dirichlet and Neumann problems.

\appendix
\section{Appendix: Integral equations away from Wood frequencies}\label{sec:integral}

In order to establish the unique solvability of the integral
equations~\eqref{eq:SLfEPer}, the proof of
Theorem~\ref{thm:uniqueness} relies on the following classical result
on solutions to the homogeneous surface-impedance problem.
\begin{Lemma}\label{lemma:uniqueness}
  Let $v(\bfx)$ is a quasi-periodic field that satisfies the Helmholtz
  equation $\Delta v + k^2 v=0$ for $z<f(\tbfx)$ (resp. $z>f(\tbfx)$), the
  outgoing condition (\ref{outgoing2}) (resp. (\ref{outgoing1})) and
  the impedance condition
\begin{equation*}
   \frac{\partial v}{\partial{n}}(\bfx)-i\zeta\, v(\bfx)=0
  \qquad (\bfx\in\Gamma)
\end{equation*}
with $\zeta>0$ (resp. $\zeta<0$). Then $v(\bfx)=0$ for $z<f(\bfx)$
(resp. $z>f(\bfx)$).
\end{Lemma}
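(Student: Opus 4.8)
The plan is to prove the statement for the region below $\Gamma$ (the case $z<f(\tbfx)$ with $\zeta>0$); the complementary assertion follows from an identical argument with the roles of $+\infty$ and $-\infty$ interchanged and the sign of $\zeta$ reversed, as I explain at the end. The heart of the matter is an energy (Green's-identity) computation carried out over a single period. First I would fix $R>0$ with $-R<z_-$ and apply Green's first identity to $v$ on the truncated period cell $\Omega_R=\{(\tbfx,z):\tbfx\in Q,\ -R<z<f(\tbfx)\}$. Since $\Delta v=-k^2 v$, the volume integral $\int_{\Omega_R}(|\nabla v|^2-k^2|v|^2)\,dV$ is real, so taking imaginary parts annihilates it and leaves $\Im\int_{\partial\Omega_R}\bar v\,\partial_n v\,dS=0$. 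The boundary $\partial\Omega_R$ splits into the grating patch $\Gamma^{per}$, the flat bottom $\{z=-R\}\cap(Q\times\RR)$, and the lateral faces of the cell. On opposite lateral faces the quasi-periodicity of $v$ makes the product $\bar v\,\partial_n v$ genuinely periodic (the Bloch phases $e^{\pm i\aalpha\cdot\vv_j}$ cancel in the product), while the outward normals are opposite, so those contributions cancel exactly.

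Next I would evaluate the two remaining fluxes. On $\Gamma^{per}$ the impedance condition $\partial_n v=i\zeta v$ gives $\Im\int_{\Gamma^{per}}\bar v\,\partial_n v\,dS=\zeta\int_{\Gamma^{per}}|v|^2\,dS$, using that $\zeta$ is real. On the bottom face I would insert the Rayleigh expansion~\eqref{outgoing2}, differentiate it term by term in $z$, and use the orthogonality of the exponentials $e^{i\vv_{j\ell}^*\cdot\tbfx}$ over $Q$ (whose area is $\dd$). The cross terms then carry factors $e^{-2R\,\Im\gamma_{j\ell}}$: as $R\to\infty$ the evanescent modes ($\Im\gamma_{j\ell}>0$) are exponentially damped, the grazing modes ($\gamma_{j\ell}=0$, relevant at Wood frequencies) drop out because of the prefactor $\gamma_{j\ell}$, and only the finitely many propagating modes survive, yielding $\lim_{R\to\infty}\Im\int_{\mathrm{bottom}}\bar v\,\partial_n v\,dS=\dd\sum_{\gamma_{j\ell}>0}\gamma_{j\ell}|B^-_{j\ell}|^2\ge 0$.

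Combining these gives the identity $\zeta\int_{\Gamma^{per}}|v|^2\,dS+\dd\sum_{\gamma_{j\ell}>0}\gamma_{j\ell}|B^-_{j\ell}|^2=0$. Because $\zeta>0$, both summands are nonnegative, so each must vanish; in particular $v=0$ on $\Gamma$, and the impedance condition then forces $\partial_n v=i\zeta v=0$ on $\Gamma$ as well. The final step --- the one requiring genuine PDE input rather than bookkeeping --- is to upgrade this vanishing Cauchy data to $v\equiv 0$ throughout $\{z<f(\tbfx)\}$. Since $\Gamma$ is smooth and $v$ solves the Helmholtz equation, and since both $v$ and $\partial_n v$ vanish on the analytic surface $\Gamma$, Holmgren's uniqueness theorem yields $v\equiv 0$ in a one-sided neighborhood of $\Gamma$; unique continuation for the Helmholtz equation, together with connectedness of the region below $\Gamma$, propagates this to the whole region. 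The care points I anticipate are the justification of the term-by-term differentiation and the $R\to\infty$ interchange in the Rayleigh series (controlled by its absolute and uniform convergence for $z$ bounded below $z_-$, the propagating contribution being a finite sum) and the correct tracking of outward-normal orientations, which is precisely what pins down the sign requirement $\zeta>0$ below $\Gamma$ and $\zeta<0$ above.
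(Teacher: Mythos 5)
Your proposal is correct and follows essentially the same route as the paper: Green's first identity over one truncated period cell, cancellation of the lateral faces by quasi-periodicity, taking imaginary parts so that the impedance term $\zeta\int_{\Gamma^{per}}|v|^2\,ds$ and the nonnegative propagating-mode flux must each vanish, then concluding from the vanishing Cauchy data on $\Gamma$. The only cosmetic differences are that the paper evaluates the bottom flux directly at $z=z_-$ (where the diagonal evanescent contributions are already purely real and the cross terms die by orthogonality, so no $R\to\infty$ limit is needed) and disposes of the last step with a terse appeal to Green's identity where you invoke Holmgren plus unique continuation.
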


\begin{proof}
  We establish the result in the case $z<f(\bfx)$ and $\zeta>0$; the
  complementary case is handled analogously.  Consider the truncated
  period
\begin{equation*}
  \Omega = \{ (\tbfx,z) : \tbfx \in Q,\,  z_-<z<f(\tbfx) \}
\end{equation*}
with lower boundary $S=\{ (\tbfx,z) : \tbfx \in Q,\, z=z_- \}$
oriented downward.  Using integration by parts we obtain
\begin{equation}\label{parts0}
  0 = \int_\Omega \big(\Delta v + k^2 v\big)\bar v = \int_\Omega \big( \!-\!|\nabla v|^2 + k^2|v|^2 \big)dx
  - \int_{\Gamma^{per}} \frac{\partial v}{\partial n}\,\bar v\ ds- \int_S \frac{\partial v}{\partial n}\,\bar v\ ds\,.
\end{equation}
(The integrals over the lateral sides of $\Omega$ add up to zero as a
result of the assumed quasi-periodicity of $v$.)  In view of the
impedance condition on $\Gamma$ and the outgoing
condition~\eqref{outgoing2} we obtain
\begin{equation*}
  \int_\Gamma \frac{\partial v}{\partial n}\,\bar v\ ds \,+\, \int_S \frac{\partial v}{\partial n}\,\bar v\ ds
  \,=\, i\zeta\int_\Gamma |v|^2\ ds\,+\, \frac{i}{2\dd}\!\!\sum_{(j,\ell), \gamma_{j\ell}>0} \gamma_{j\ell} |c_{j\ell}^{-}|^2.
\end{equation*}
The imaginary part of this quantity equals the imaginary part of
(\ref{parts0}) and it must therefore vanish.  It follows that $v=0$ on
$\Gamma$. The impedance condition then yields $\partial v/\partial{n}
= 0$ on $\Gamma$ as well.  By Green's identity it follows that $v=0$
in $\Omega$ and therefore for all $\bfx$ with $z<f(\tbfx)$
($\tbfx\in\mathbb{R}^2$).
\end{proof}

For completeness we now present a simpler alternative proof of
Theorem~\ref{thm:uniqueness}, which, however, is restricted to the
case $p=0$ (unshifted Green function) and to configurations away from
Wood anomalies.
\begin{Theorem} Let $\frac{\eta}{\xi}<0$, let $p=0$, and let us assume
  that $k$ is a wavenumber for which the quasi-periodic Green function
  $G^q_0$ exists, that is, $\gamma_{j\ell}\not=0$ for all pairs
  $(j,\ell)\in\ZZ$. Then the integral equation~\eqref{eq:SLfEPer} is
  uniquely solvable in $L^2(\Gamma^{per})$.
\end{Theorem}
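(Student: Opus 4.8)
The plan is to follow the same Fredholm strategy as in the proof of Theorem~\ref{thm:uniqueness}, but to exploit the fact that for $p=0$ the Green function $G^q_0$ coincides with the classical radiating quasi-periodic Green function $G^q$ of~\eqref{Fouriersum}, whose only singularities lie on the lattice $\Lambda\subset\{z=0\}$ and which is outgoing as $z\to+\infty$ \emph{and} as $z\to-\infty$. First I would invoke compactness of the single- and double-layer operators on the smooth surface $\Gamma^{per}$ (using that $G^q_0$ has the same singularity as the free-space Green function $G$) to reduce, via the Fredholm alternative, the unique solvability of~\eqref{eq:SLfEPer} to the injectivity of the associated operator; thus it suffices to show that the only $L^2(\Gamma^{per})$ solution of the homogeneous equation~\eqref{eq:SLfEPer_hom} (with $p=0$) is $\phi\equiv0$.

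Given such a $\phi$, let $u$ be the combined-layer potential~\eqref{combinedlayer}. Because all sources sit on $\Gamma^{per}$, the field $u$ solves the Helmholtz equation in both $\Omega^+$ and $\Omega^-$, is quasi-periodic, and---crucially, since $G^q$ is two-sided radiating---satisfies the outgoing conditions~\eqref{outgoing1} at $+\infty$ and~\eqref{outgoing2} at $-\infty$. The homogeneous equation asserts precisely that the exterior trace $u|_{\Gamma^+}$ vanishes, so by uniqueness for the Dirichlet problem~\eqref{helmholtz} in $\Omega^+$ (valid away from Wood frequencies, cf.~\cite{Petit}) one concludes $u\equiv0$ in $\Omega^+$, whence $u|_{\Gamma^+}=0$ and $\partial_n u|_{\Gamma^+}=0$.

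Next I would read off the Cauchy data of $u$ from below $\Gamma$ using the standard jump relations of the single- and double-layer potentials. With the orientation implicit in the $+\tfrac{\xi}{2}\phi$ term of~\eqref{eq:SLfEPer}, these give $u|_{\Gamma^-}=-\xi\phi$ and $\partial_n u|_{\Gamma^-}=i\eta\phi$. Eliminating $\phi$ shows that $u$ satisfies the homogeneous impedance condition $\partial_n u - i\zeta\,u=0$ on $\Gamma$ with $\zeta=-\eta/\xi$; the hypothesis $\eta/\xi<0$ makes $\zeta>0$. Since $u$ is, as noted, a quasi-periodic Helmholtz solution in $\Omega^-$ that is outgoing as $z\to-\infty$, Lemma~\ref{lemma:uniqueness} (the case $z<f$, $\zeta>0$) forces $u\equiv0$ in $\Omega^-$; the relation $u|_{\Gamma^-}=-\xi\phi$ with $\xi\ne0$ then yields $\phi\equiv0$, establishing injectivity and hence the theorem.

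The one point that genuinely requires the $p=0$ and away-from-Wood hypotheses---and which constitutes the essential simplification relative to Theorem~\ref{thm:uniqueness}---is the verification that $u$ is a bona fide radiating Helmholtz solution in the lower region $\Omega^-$. For $p\ge1$ the shifted Green function $G^q_p$ carries extra singularities at $z=-qd$ and fails to be outgoing as $z\to-\infty$, which is exactly why the general proof must route the lower-half-space argument through the auxiliary non-radiating function $B^q$; here the two-sided radiating character of $G^q$ lets us apply Lemma~\ref{lemma:uniqueness} to $u$ directly.
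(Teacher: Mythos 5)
Your proposal is correct and follows essentially the same route as the paper's own proof: Fredholm reduction to injectivity via compactness of the layer operators, Dirichlet uniqueness in $\Omega^+$ to kill $u^+$, the jump relations $u|_{\Gamma^-}=-\xi\phi$ and $\partial_n u|_{\Gamma^-}=i\eta\phi$ yielding the homogeneous impedance condition with $\zeta=-\eta/\xi>0$, and then Lemma~\ref{lemma:uniqueness} applied in $\Omega^-$. Your closing remark about why the $p=0$, non-Wood hypotheses permit applying Lemma~\ref{lemma:uniqueness} directly to $u$ in the lower region---rather than routing through the auxiliary non-radiating function $B^q$ as in Theorem~\ref{thm:uniqueness}---accurately captures the paper's own rationale for this simplified argument.
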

\begin{proof}
  Given that the surface $\Gamma$ is smooth and that $G^{q}_0$ has the
  same singularity as $G$, it follows that the integral operators on
  the left-hand side of equation~\eqref{eq:SLfEPer} are compact
  operators in the space $L^2(\Gamma^{per})$. Thus, by Fredholm
  theory, the unique solvability of equation~\eqref{eq:SLfEPer} is
  equivalent to the injectivity of the operator on the left-hand side
  of that equation. In order to establish injectivity, let
  $\phi_0\in L^2(\Gamma^{per})$ denote a solution of
  equation~\eqref{eq:SLfEPer} with zero right hand-side, and let
  $u^{\pm}$ denote the restrictions to $\Omega^{\pm}$ of the
  potentials $u$ defined by equation~\eqref{combinedlayer} above and
  below $\Gamma$ with density $\phi = \phi_0$. It follows that $u^+$
  is a radiating solution of the Helmholtz equation in $\Omega^+$ with
  zero Dirichlet boundary conditions, and hence $u^+=0$ in
  $\Omega^+$~\cite{Petit}. Using the jump relations satisfied by the
  layer potentials in equation~\eqref{combinedlayer} we obtain
  $u^{-}|_\Gamma=-\xi\phi_0$ and
  $\left(\frac{\partial
      u^{-}}{\partial{n}}\right)|_\Gamma=i\eta\phi_0$. Thus $u^{-}$ is
  a quasi-periodic radiating solution of the Helmholtz equation in
  $\Omega^{-}$ with zero impedance boundary conditions
  $\frac{\partial v}{\partial{n}}(\bfx)-i\zeta\, v(\bfx)=0,\
  \zeta=-\frac{\eta}{\xi}>0$ on $\Gamma$.  By
  Lemma~\ref{lemma:uniqueness} it follows that $v^-=0$ in
  $\Omega^{-}$, and thus $\phi_0=0$ in $L^2(\Gamma^{per})$, as
  desired. The proof of the theorem is now complete.
\end{proof}

No uniqueness results exist for the Helmholtz scattering
problem~\eqref{helmholtz} under Neumann boundary-value conditions and
the radiation condition~\eqref{outgoing1}, even away from Wood
anomalies---although it has been repeatedly conjectured
(cf.~\cite[p. 147]{KirschUniqueness}),~\cite{BrunoDelourme}) that such a
uniquess result does hold. Assuming that the wavenumber $k$ is such
that this scattering problem does admit a unique solution, however, we
may seek the scattered field in the form
\begin{equation}\label{eq:SLfN}
u(\mathbf x)=\int_{\Gamma} G^q(\mathbf x-\mathbf x')\psi(\mathbf x')ds(\mathbf x'),\quad \mathbf{x}\in\mathbb{R}^3\setminus \Gamma
\end{equation}
in terms of the unknown surface density $\psi$. Using the jump
condition for the normal derivatives of single-layer potentials and
the sound-hard (Neumann) boundary condition, the unknown density
$\psi$ is seen to be a solution of the integral equation
\begin{equation}\label{eq:SLfEN}
-\frac{\psi(\mathbf x)}{2}+\int_{\Gamma}\frac{\partial G^q(\mathbf x-\mathbf x')}{\partial{n}(\mathbf{x})}\psi(\mathbf x')ds(\mathbf x')
=- i(\boldsymbol{\alpha},-\gamma)\cdot\mathbf{n}(\mathbf{x})\ e^{i(\boldsymbol{\alpha}\cdot\tbfx-\gamma z)},\;\; \bfx=(\tbfx,z)\in\Gamma.
\end{equation}
Given that, addtionally, the scattering problems from doubly periodic
surfaces and Dirichlet boundary conditions admit unique solutions for
all wavenumbers, it follows that the integral
equations~\eqref{eq:SLfEN} have themselves unique solutions.


\bigskip
\bigskip

\noindent
{\bfseries\large Data accessibility.}
All data applicable to this paper is included in the article.

\medskip
\noindent
{\bfseries\large Competing interests.}
There are no competing interests relevant to this article.

\medskip
\noindent
{\bfseries\large AuthorsÕ contributions.}
All authors are equally considered co-contributors in this article.

\medskip
\noindent
{\bfseries\large Funding statement.}
This effort was supported by AFOSR, NSF and a NSSEFF Vannevar Bush
Fellowship under contracts FA9550-15-1-0043, DMS-1411876 and
N00014-16-1-2808 (OB); NSF DMS-0807325 (SPS); NSF DMS-1312169 and
DMS-1614270 (CT); and NSF DMS-1211638~(SV).

\medskip
\noindent
{\bfseries\large Ethics statement.}
No ethics statement applies to this article.


\end{document}